\newcommand{\si}[1]{#1}
\newcommand{\jo}[1]{}

\si{
\documentclass[10pt,a4paper]{article}
\usepackage[utf8]{inputenc}
\usepackage{graphicx}
\usepackage{amsfonts}
\usepackage{amsthm}
\usepackage{amsmath}
\usepackage{amssymb}
\usepackage{multirow}
\usepackage{array}
\usepackage{hyperref}
\usepackage{xcolor}
\usepackage{multicol}
\usepackage{algorithm}
\usepackage{indentfirst}
\usepackage{algorithmic}
\usepackage{hyperref,cleveref}
\usepackage[left=3cm,right=2cm,top=3cm,bottom=2cm]{geometry}


\tracingstats=0

\newtheorem{theorem}{Theorem}[section]
\newtheorem{proposition}{Proposition}[section]
\newtheorem{corollary}{Corollary}[section]
\newtheorem{lemma}{Lemma}[section]
\newtheorem{definition}{Definition}[section]
\newtheorem{example}{Example}[section]
\newtheorem{remark}{Remark}[section]

\hypersetup{
  colorlinks=true,
  citecolor=blue,
  linkcolor=blue,
  filecolor=magenta,      
  urlcolor=cyan,
}}

\jo{
\RequirePackage{fix-cm}
\documentclass[smallextended]{svjour3}       
\smartqed  

\usepackage[utf8]{inputenc}
\usepackage{graphicx}
\usepackage{amsfonts}
\usepackage{amsmath}
\usepackage{amssymb}
\usepackage{array}
\usepackage{xcolor}
\usepackage{indentfirst}
\usepackage{algorithm}
\usepackage{algorithmic}
\usepackage{chngcntr}
\usepackage{hyperref}

\tracingstats=0
}

\newcommand{\faceq}{\trianglelefteq}			 
\newcommand{\adj}{^*}
\newcommand{\sF}{\mathbb{F}}
\newcommand{\G}{\mathcal{G}}					 

\newcommand{\C}{\mathcal{C}}					 
\newcommand{\E}{\mathbb{E}}					 
\renewcommand{\S}{\mathbb{S}}					 



\renewcommand{\bar}{\overline}                   
\newcommand{\I}{\mathbb{I}}                      
\newcommand{\K}{\mathcal{K}}                     
\newcommand{\lin}{\mathrm{lin}}                  
\newcommand{\Sp}{\mathrm{span}}                  
\newcommand{\Ker}{\mathrm{Ker\hspace{0.03cm}}}                 
\renewcommand{\Im}{\mathrm{Im\hspace{0.03cm}}}                 
\renewcommand{\int}{\mathrm{int\hspace{0.03cm}}}               
\newcommand{\rank}{\mathrm{rank }}               
\newcommand{\F}{\Omega}                     
\newcommand{\R}{\mathbb{R}}  					 
\newcommand{\T}{\top\hspace{-1pt}}               
\newcommand{\pol}{^{\textnormal{o}}}             
\newcommand{\xb}{\bar{x}}                        
\newcommand{\yb}{\bar{y}}                        

\newcommand{\spn}{\textnormal{span}}
\newcommand{\cl}{\textnormal{cl}}
\newcommand{\Fmin}{F_{\min}}
 
\newcommand{\ri}{\textnormal{ri}}				

\makeatletter
\newcommand{\tpitchfork}{%
  \vbox{
    \baselineskip\z@skip
    \lineskip-.52ex
    \lineskiplimit\maxdimen
    \m@th
    \ialign{##\crcr\hidewidth\smash{$-$}\hidewidth\crcr$\pitchfork$\crcr}
  }%
}
\makeatother                                     


\jo{
\usepackage[misc]{ifsym}
\newcommand{\corr}{${\textnormal{\Letter}}$}
}

\begin{document}

\title{A minimal face constant rank constraint qualification for reducible conic programming
\footnotetext{The authors received financial support from
 CEPID - FAPESP (grant 
 2013/07375-0),  
FAPESP (grants 
2018/24293-0, 
2017/18308-2, and 
2017/17840-2), 
CNPq (grants 
301888/2017-5, 
303427/2018-3, 
404656/2018-8, and
306988/2021-6), 
PRONEX - CNPq/FAPERJ (grant 
E-26/010.001247/2016), and 
FONDECYT grant 
1201982 and 
Centro de Modelamiento Matem{\'a}tico (CMM), ACE210010 and FB210005, BASAL funds for center of excellence, all from ANID-Chile.}}

\jo{\titlerunning{A minimal face constant rank CQ for reducible conic programming}}

\jo{\journalname{Mathematical Programming}}

\si{
\author{
Roberto Andreani \thanks{Department of Applied Mathematics, State University of Campinas, Campinas, SP, Brazil. 
	Email: {\tt andreani@unicamp.br}}
\and
Gabriel Haeser \thanks{Department of Applied Mathematics, University of S{\~a}o Paulo, S{\~a}o Paulo, SP, Brazil. 
	Emails: {\tt ghaeser@ime.usp.br, leokoto@ime.usp.br}}	
\and 
Leonardo M. Mito \footnotemark[2]
\and
H{\'e}ctor Ram{\'i}rez \thanks{Departamento de Ingenier{\'i}a Matem{\'a}tica and Centro de Modelamiento Matem{\'a}tico (AFB170001 - CNRS UMI 2807), Universidad de Chile, Santiago, Chile.
Email: {\tt hramirez@dim.uchile.cl}}
}
}

\jo{
\author{
Roberto Andreani
\and
Gabriel Haeser
\and 
Leonardo M. Mito
\and
H{\'e}ctor Ram{\'i}rez
}

\authorrunning{R. Andreani, G. Haeser, L. M. Mito, and H. Ram{\'i}rez} 

\institute{ 
    Roberto Andreani \at
    Department of Applied Mathematics, 
    University of Campinas, 
    S{\~ao} Paulo, Brazil. \\
	\email{andreani@unicamp.br}
    \and
    Gabriel Haeser \corr \at
    Department of Applied Mathematics, 
    University of S\~ao Paulo, 
    S{\~a}o Paulo, Brazil.\\
    \email{ghaeser@ime.usp.br} 
    \and
    Leonardo M. Mito  
    \at
    Department of Applied Mathematics, 
    University of S{\~a}o Paulo, 
    S{\~a}o Paulo, Brazil.\\
	\email{leonardommito@gmail.com}
	\and	
	H{\'e}ctor Ram{\'i}rez \at
	Department of Mathematical Engineering and Center for Mathematical Modeling (CNRS IRL 2807),
	University of Chile, 
	Santiago, Chile.\\
	\email{hramirez@dim.uchile.cl}
  }
}

\jo{\date{Received: date / Accepted: date}}
\si{\date{April 26, 2023. Last revised on November 30, 2024.}}

\maketitle

\begin{abstract}
In a previous paper {\it [Andreani et al, Math. Prog. 202, p. 473--514, 2023]} we introduced a constant rank constraint qualification for nonlinear semidefinite and second-order cone programming by considering all faces of the underlying cone. This condition is independent of Robinson's condition and it implies a strong second-order necessary optimality condition which depends on a single Lagrange multiplier instead of the full set of Lagrange multipliers. In this paper we expand on this result in several directions, namely, we consider the larger class of $\mathcal{C}^2-$cone reducible constraints and we show that it is not necessary to consider all faces of the cone; instead a single specific face should be considered (which turns out to be weaker than Robinson's condition) in order for the first order necessary optimality condition to hold. This gives rise to a notion of facial reduction for nonlinear conic programming, that allows locally redefining the original problem only in terms of this specific face instead of the whole cone, providing a more robust formulation of the problem in which Robinson's condition holds. We were also able to prove the strong second-order necessary optimality condition in this context by considering  only the subfaces of this particular face, which is a new result even in nonlinear programming.
\end{abstract}

\si{
\

\textbf{Keywords:} Constraint qualifications, second-order necessary optimality conditions, cone reducibility, facial reduction, conic programming.
}

\jo{\keywords{Constraint qualifications \and Second-order necessary optimality conditions \and Cone reducibility \and Facial reduction \and Conic programming}
\subclass{ 90C46 \and 90C30 \and 90C26 \and 90C22}
}

\section{Introduction}

In this paper we are interested in the general nonlinear conic optimization problem with smooth data. In most references on this problem, at least Robinson's constraint qualification is usually assumed in order to guarantee certain stability of the underlying problem. Under this condition, the set of Lagrange multipliers at a local solution is non-empty and bounded and a second-order necessary optimality condition may also be formulated. On the other hand, for standard nonlinear programming problems where the cone is the non-negative orthant, several other constraint qualifications are available with different characteristics. Recently a geometric definition of a constant rank constraint qualification  has been proposed in \cite{CRCQfaces}, extending the well known nonlinear programming concept proposed by Janin in \cite{crcq} to the context of nonlinear second-order cone and semidefinite programming. In this paper we propose a much more general approach for general reducible cones, where the proofs turn out to be significantly simplified while considerably broadening the results to a class of problems that include, for instance, optimization problems over the cone defined by the epigraph of the Ky Fan matrix $k$-norm~\cite{ding-kyfan} as well as cartesian products and certain preimages of other reducible cones~\cite[Propositions 3.1 and 3.2]{shapiro-sensitivity}.

A key concept in our analysis is cone-reducibility, which is able to treat every point of a cone as a point on the vertex of a reduced cone. Then, the constant rank conditions formulated for second-order cone and semidefinite programming may be stated as the local constant dimension of a linearization of the feasible set along the subspace orthogonal to each of the faces of the underlying cone, which turns out to be the adequate definition for any general reducible cone.  Besides showing that this condition is able to ensure existence of Lagrange multipliers (which may now be an unbounded set), we also prove a second-order necessary optimality condition that is stronger than the one fulfilled at local solutions that satisfy Robinson's condition.
Namely, we show that under our constant rank condition (which is independent of Robinson's condition), the usual second-order term associated with a nonlinear conic programming (that is, the Hessian of the Lagrangian minus the so-called ``sigma term'', evaluated at any direction of the critical cone) is non-negative and invariant to the choice of the Lagrange multiplier. This shows that {\it any} Lagrange multiplier can be used to check the standard second-order necessary optimality condition instead of considering the whole set of Lagrange multipliers to deal with different directions on the critical cone, as it is the case when one assumes Robinson's condition to hold.

Somewhat surprisingly, differently from what was done in \cite{CRCQfaces}, we were able to formulate two different conditions, both weaker than the constant rank condition; one for the first-order results and a slightly stronger one for the second-order results. Namely, instead of considering {\it all} faces of the cone in the formulation of our constant rank assumption, we ask for the constant rank along one particular face $F$, which is the minimal face of the cone that contains the standard linearization cone. The condition for the second-order result is obtained considering all subfaces of $F$. The first-order condition is a generalization of what is known as {\it constant rank of the subspace component} (CRSC, \cite{cpg}) in nonlinear programming (strictly weaker than Robinson's/Mangasarian-Fromovitz's condition), while the second-order result is new even in this context. Notice that it is well known that the strong second-order necessary optimality condition does not hold under Robinson's condition or CRSC (see the extended version of \cite{conjnino}); thus our stronger version of CRSC (which is independent of Robinson's condition) provides an adequate new condition, even for nonlinear programming, which gurantees the validity of the strong second-order necessary optimality condition.

Acknowledging that Robinson's condition is a very desirable property of a conic programming problem to have, we provide a useful application of CRSC by showing that it allows a conic programming problem to be locally equivalently rewritten by replacing the original cone by the minimal face $F$, where the reformulated problem satisfies Robinson's condition. This procedure is well known in the literature of linear conic programming \cite{wolkowicz1981,wolkowicz1997,wolkowicz2012,wolkowicz2013,waki2013,pataki2013,tuncel} but its extension to the nonlinear case has not been previously considered. Unaware of the developments in the conic programming literature, in \cite{cpg} it was shown that for a nonlinear programming problem that satisfies CRSC, a particular index set of inequality constraints are locally equivalent to equality constraints, while \cite{shulu} shows that the reformulated problem satisfies Robinson's condition under a constant rank assumption. This can be viewed as a nonlinear programming analogue of the facial reduction procedure known in linear conic programming. Our result extends both approaches by providing a nonlinear facial reduction procedure in the general context of nonlinear conic programming.\\

{\bf Summary of contributions:}
\begin{itemize}
\item We consider the constant rank constraint qualification (CRCQ) defined in \cite{crcq} for nonlinear programming and in \cite{CRCQfaces} for nonlinear second-order cone and semidefinite programming and we extend its definition to the more general class of nonlinear $\mathcal{C}^2-$cone reducible constraints, showing the existence of a Lagrange multiplier at local solutions;
\item Under CRCQ a strong second-order necessary optimality condition is proved to hold; the difference with the standard second-order condition (obtained under Robinson's condition) is that it can be checked with {\it any} Lagrange multiplier, obtaining thus a stronger second-order necessary optimality condition, even though CRCQ is a condition independent of Robinson's condition.
\item An explanation of the previous result is presented by means of showing that CRCQ guarantees that the standard second-order term evaluated at a given direction of the critical cone is independent of the chosen Lagrange multiplier. This result was previously proved only for nonlinear programming \cite{gfrerer,conjnino}; however, even in this context, this result does not seem to be well-known.
\item All previous results were in fact obtained for general reducible cones under two statements weaker than CRCQ, which considerably simplifies the proofs. That is, by considering only a particular face $F$ of the underlying reduced cone (instead of all of them), we define the constant rank of the subspace component (CRSC, \cite{cpg}), which is weaker than Robinson's condition but is enough for proving most results presented in the paper. The exception corresponds to our second-order result, which requires a stronger variant of CRSC (strong-CRSC) formulated in terms of all subfaces of $F$. The latter is new even in nonlinear programming.
\item Finally, under CRSC, we show that a nonlinear conic programming problem can be recast by considering in the constraint only the particular face $F$ instead of the whole cone, which provides a locally equivalent problem where Robinson's condition is satisfied. This generalizes the identification of inequality constraints that locally behave as equalities, known in nonlinear programming, while also extending what is known as the facial reduction procedure in linear conic programming, showing that these apparent non-related results are in fact two manifestations of the same phenomenon.
\end{itemize}

{\bf Notation:}

For any positive integer $n$, we denote by $\R^n$ the set of $n$-dimensional vectors of real numbers equipped with the usual inner product and corresponding norm $\|\cdot\|$. By $\R^n_+$ we denote the subset of $\R^n$ composed of those vectors with non-negative entries. For $G:\mathbb{F}\to\E$, where $\mathbb{F}$ and $\E$ are finite dimensional vector spaces with inner product $\langle\cdot,\cdot\rangle$, we denote by $DG(\xb)[\cdot]:\mathbb{F}\to\E$ the derivative of $G$ at $\xb\in\mathbb{F}$ and $DG(\xb)^*[\cdot]$ its adjoint operator. The second-order derivative of $G$ at $\xb\in\mathbb{F}$ evaluated at $d\in\mathbb{F}$ is denoted by $D^2G(\xb)[d,d]\in\E$. Given a set $C\subseteq\E$, the polar cone of $C$ is denoted by $C\pol\doteq\{z\in\E:\langle z,x\rangle\leq0,\forall x\in C\}$. The interior and relative interior of $C$ are denoted respectively by $\int(C)$ and $\ri(C)$, while the subspace generated by $C$ is denoted by $\Sp(C)$  and its lineality space is denoted by $\lin(C)$.
We denote by $C^\perp$ the subspace orthogonal to $\Sp(C)$. The image and kernel of a linear operator $A\colon\mathbb{F}\to\E$ are denoted respectively by $\Im A$ and $\Ker A$. The cardinality of a finite set $S$ is denoted by $|S|$.

\section{Constraint qualifications, reducibility and faces}\label{background}

Let $\E$ be a finite-dimensional linear space equipped with an inner product $\langle \cdot , \cdot \rangle$, and let $\K\subseteq \E$ be a non-empty closed convex cone. The main object of our study in this paper is the following problem:
\begin{equation}
	\begin{array}{ll}
		\underset{x\in \R^n}{\nonumber\mbox{\textnormal{Minimize }}} 	& f(x),\\ 
		\label{NCP}\mbox{subject to}	& G(x)\in \K,
	\end{array}
	\tag{NCP}
\end{equation}
where $f\colon \R^n\to \R$ and $G\colon \R^n\to \E$ are at least twice continuously differentiable functions. We will denote the feasible set of \eqref{NCP} by $\Omega\doteq G^{-1}(\K)$. This general formulation covers important classes of optimization problems such as nonlinear programming (NLP) with $\K=\R^m_+$; nonlinear semidefinite programming (NSDP) with $\K=\S^m_+\doteq\{ A\in \R^{m\times m} : v^\top A v \geq 0 \, \forall v\in \R^m\}$, and nonlinear second-order cone programming (NSOCP) with 
$\K= \Pi_{i=1}^M \mathcal{L}_{m_i}$ and $\mathcal{L}_{m_i}\doteq\{ v\in \R^{m_i} : v_1  \geq \| (v_2,...,v_{m_i})\|\}$. For simplicity we do not consider equality constraints, however, equality constraints may be included in a somewhat standard way similarly to the discussion conducted in \cite{CRCQfaces}.

To contextualize our contribution and introduce some notation, let us recall some general aspects of~\eqref{NCP}. To start let $\xb\in \Omega$ and consider the set
\[
	T_{\Omega}(\xb)\doteq \{d\in \R^n\colon \textnormal{dist}(\xb+td,\Omega)=o(t), \ \forall t>0\}
\] which is known as the (Bouligand) \textit{tangent cone} to $\Omega$ at $\xb$. For nonlinear optimization problems, the set $T_{\Omega}(\xb)$ is not always directly computable, but it admits an outer approximation in terms of the tangent cone to $\K$ at $G(\xb)$, defined as follows:
\[
	L_{\Omega}(\xb)\doteq DG(\xb)^{-1}(T_{\K}(G(\xb)))= \{d\in \R^n\colon DG(\xb)d\in T_{\K}(G(\xb))\}.
\]
This cone is often called the \textit{linearized tangent cone} (or simply \textit{linearized/linearization cone}) to $\Omega$ at $\xb$, and as mentioned before, $T_{\Omega}(\xb)\subseteq L_{\Omega}(\xb)$. Moreover, as it was shown by Guignard~\cite{guignard} and others, the polar of $L_\Omega(\xb)$ is given by the closure of the set
\begin{equation}\label{def:H}
H(\xb)\doteq DG(\xb)^*[T_\K(G(\xb))\pol]\doteq\{DG(\xb)^*[Y]\colon Y\in T_\K(G(\xb))\pol\}
\end{equation}
and hence if $H(\xb)$ is closed, then $L_{\Omega}(\xb)\pol=H(\xb)$ -- see~\cite[Proposition 1]{guignard} or alternatively~\cite[Lemma 2.1]{CRCQfaces}. Characterizing the polar cone of $L_{\Omega}(\xb)$ is of great importance in the study of optimality conditions for~\eqref{NCP} because one of the classical ways of doing so is by noticing that if $\xb$ is a local minimizer of~\eqref{NCP}, then $-\nabla f(\xb)\in T_{\Omega}(\xb)\pol$, and if, in addition, the equality $T_{\Omega}(\xb)\pol=L_{\Omega}(\xb)\pol$ holds true and $H(\xb)$ is closed, then 
\begin{equation}\label{eq:geokkt}
	-\nabla f(\xb)\in H(\xb).
\end{equation}
Equation~\eqref{eq:geokkt} is widely known in the literature as the \textit{Karush/Kuhn-Tucker} (KKT) condition, and every element of the set
\[
	\Lambda(\xb)\doteq \{\bar{Y}\in T_{\K}(G(\xb))\pol\colon -\nabla f(\xb)=DG(\xb)^*[\bar{Y}]\}
\]
is called a \textit{Lagrange multiplier} associated with $\xb$. Furthermore, any condition (not depending on the objective function $f$) that ensures that the KKT condition is necessary for local optimality of a given $\xb\in \Omega$ is called a \textit{constraint qualification} (CQ) for~\eqref{NCP} at $\xb$. For instance, the condition we just mentioned:
\begin{center}
	$T_{\Omega}(\xb)\pol=L_{\Omega}(\xb)\pol$ and $H(\xb)$ is closed
\end{center}
is known as \textit{Guignard's CQ}~\cite{guignard}.

Stronger constraint qualifications may have additional potentially interesting properties; for instance, the classical \textit{Robinson's CQ}~\cite{Rob76} (at $\xb$):
\begin{equation}\label{def:robinson}
	0\in \int(\Im DG(\xb)-\K+G(\xb))
\end{equation}
ensures that $\Lambda(\xb)$ is a nonempty compact set~\cite[Theorem 3.9]{bonnans-shapiro}, which is a powerful tool for obtaining results regarding convergence of algorithms and error bounds, whereas the \textit{transversality} (or \textit{nondegeneracy}) condition (at $\xb$):
\begin{equation}\label{def:ndg}
\Im DG(\xb)+\lin(T_{\K}(G(\xb)))=\E,
\end{equation}
originally presented for nonlinear semidefinite programming by Shapiro and Fan~\cite{shapiro1995eigenvalue} and later generalized to~\eqref{NCP} by Shapiro~\cite{shapiro-uniqueness}, implies that $\Lambda(\xb)$ is a singleton; for this reason, nondegeneracy is also widely used as a standard CQ in the conic programming literature.

Recently, in Andreani et al.~\cite{CRCQfaces}, it was introduced a constant rank-type CQ for nonlinear semidefinite programming and nonlinear second-order cone programming that, as we interpret, stands upon two basic geometric notions from the literature: \textit{cone reducibility} and \textit{faces}. The first contribution of this paper is to extend this condition (along with all of its known properties) to a general context of~\eqref{NCP} where the notion of reducibility may still be applied, and then we will proceed to our main results. To do so, let us first review these fundamental concepts, starting with cone reducibility, following the classical book of Bonnans and Shapiro~\cite{bonnans-shapiro}.

Roughly speaking, the notion of reducibility allows one to visualize any given point of $\K$ as the vertex of a potentially ``smaller'' pointed cone $\C$ in a smooth way. In more accurate terms:

\begin{definition}[Definition 3.135 of~\cite{bonnans-shapiro}]
Let $\E$ and $\mathbb{F}$ be finite-dimensional linear spaces, and let $\K\subseteq \E$ and $\C\subseteq \mathbb{F}$ be non-empty closed convex cones; moreover, assume that $\C$ is pointed. We say that $\K$ is \textit{reducible} to $\mathcal{C}$ at a point $Y\in \K$ if there exists a neighborhood $\mathcal{N}$ of $Y$ and a twice continuously differentiable\footnote{The twice continuous differentiability of $\Xi$ ($C^2$-reducibility) is only required for the results presented in Section 4. For the remaining parts of the paper it is enough to assume continuous differentiability.} reduction function $\Xi\colon \mathcal{N}\to \mathbb{F}$ (possibly depending on $Y$) such that 
\begin{enumerate}
\item $\Xi(Y)=0$;
\item $D\Xi(Y)$ is surjective;
\item $\K\cap \mathcal{N}=\{Z\in \mathcal{N}\colon \Xi(Z)\in \mathcal{C}\}.$
\end{enumerate}
\end{definition}

For the particular case of NLP, where $\E=\R^m$, $\K=\R^m_+$, and $G(x)\doteq (g_1(x),\ldots,g_m(x))$, an example of reduction is to isolate \textit{active constraints} -- that is, the constraints indexed by $\mathcal{A}(\xb)\doteq \{j\in \{1,\ldots,m\}\colon g_j(\xb)=0\}$ -- in a neighborhood of $G(\xb)$, for a given feasible point $\xb\in \Omega$. Indeed, in this case we have $\C=\R^{|\mathcal{A}(\xb)|}_+$ and the reduction mapping is given by $\Xi(y_1,\ldots,y_m)\doteq (y_i)_{i\in \mathcal{A}(\xb)}$. Then, in a neighborhood of $\xb$, the original constraint $G(x)\in \R^m_+$ is equivalent to the reduced constraint 
\begin{equation}\label{eq:nlpred}
	\G(x)\in \R^{|\mathcal{A}(\xb)|}_+,\mbox{ where }\G(x)\doteq \Xi(G(x))=(g_i(x))_{i\in \mathcal{A}(\xb)}.
\end{equation}

On the other hand, it is well-known that second-order and semidefinite cones are reducible at every point (see \cite{BonRam} and \cite[Example 3.140]{bonnans-shapiro}, respectively). Indeed, for the case of a second order cone  $\mathcal{L}_{m}$ (for some given $m$), a reduction mapping depending on the point $\yb \in \mathcal{L}_{m}$ may be defined as follows: when $\yb\in \int ( \mathcal{L}_{m})$, then we can trivially choose its reduction as identically zero and $\C=\{0\}$; when $\yb=0$ then its reduction is the identity function and $\C=\mathcal{L}_{m}$; while when $\yb$ is in the non-zero boundary of $\mathcal{L}_{m}$, then $\Xi(y)\doteq y_1 -\| (y_2,\dots,y_m)\|$ and $\C=\R_+$. For the case of positive semidefinite matrices, for a given $\bar A\in \S^m_+$ we can locally construct an analytic function $E(\cdot)$ such that the columns of $E(\bar A)$ are an orthornormal eigenbasis of $\Ker \, \bar A$ and the columns of $E(A)$ correspond to an orthornormal basis of the eigenspace associated with the smallest $m-r$ eigenvalues of $A$, being $r\doteq \rank\, \bar A$. Then, a reduction for $\S^m_+$ at $\bar A$ is given by $\C\doteq\S^{m-r}_+$ and
\begin{equation}\label{eq:red_SDP}
	\Xi(A)\doteq E(A)^\top A E(A).
\end{equation}

Hence, the context of general cone-constrained optimization problems is quite similar to the NLP case: if $\K$ is reducible to $\C$ at $G(\xb)$ by the reduction function $\Xi$, define $\G\doteq \Xi\circ G$ and the reduced constraint $\G(x)\in \C$ turns out to be equivalent to the original constraint $G(x)\in \K$ in a sufficiently small neighborhood of $\xb$ with $\G(\xb)=0$. By replacing $\mathbb{F}$ with the affine hull of $\C$, we will assume that $\C$ has nonempty interior in $\mathbb{F}$. With this in mind, from this point onwards we will focus on the reduced problem
\begin{equation}
	\begin{array}{ll}
		\underset{x\in \R^n}{\nonumber\mbox{\textnormal{Minimize }}} 	& f(x),\\ 
		\label{redNCP}\mbox{subject to}	& \G(x)\in \C
	\end{array}
	\tag{Red-NCP}
\end{equation}
around a given point $\xb\in\F$ which will be clear from the context.
In order to translate results about~\eqref{redNCP} back to the original problem language, we will briefly recall some classical correspondences between~\eqref{NCP} and~\eqref{redNCP} that can also be found, for instance, in Bonnans and Shapiro's book~\cite[Section 3.4.4]{bonnans-shapiro}. 

As a start, observe that
\[
	D\Xi(G(\xb))[T_\K(G(\xb))]=T_\C(\G(\xb))= \C
\]
because $\G(\xb)=0$ and, therefore, $T_\K(G(\xb))\pol=D\Xi(G(\xb))^*[\C\pol]$, as it is also stated in~\cite[Equation 3.267]{bonnans-shapiro}. Then, we use the chain rule
\[
	D\G(\xb)[ \ \cdot \ ]=D\Xi(G(\xb))[DG(\xb)[ \ \cdot \ ]]
\]
to conclude that
\begin{equation}\label{eq:Lred-L}
	\begin{aligned}
		L_\Omega(\xb) & =\bigcap_{\mu\in T_\K(G(\xb))\pol}\{d\in \R^n\colon \langle DG(\xb)[d],\mu \rangle\leqslant 0\}\\
		& =\bigcap_{\eta\in\C\pol}\{d\in \R^n\colon \langle DG(\xb)[d],D\Xi(G(\xb))^*[\eta] \rangle\leqslant 0\}\\
		& =\bigcap_{\eta\in\C\pol}\{d\in \R^n\colon \langle D\G(\xb)[d],\eta \rangle\leqslant 0\}\\
		& = \{d\in \R^n\colon D\G(\xb)[d]\in \C\};
	\end{aligned}
\end{equation}
that is, the linearized cone of~\eqref{NCP} coincides with the linearized cone of the reduced problem~\eqref{redNCP}. Similarly, recall the set $H(\xb)$ defined in~\eqref{def:H} and notice that
\begin{equation}\label{eq:hred-h}
	\begin{aligned}
	H(\xb) & = DG(\xb)^*[T_{\K}(G(\xb))\pol] \\
	& = DG(\xb)^*[D\Xi(G(\xb))^*[\C\pol]]\\
	& = D\G(\xb)^*[\C\pol],
	\end{aligned}
\end{equation}
meaning it also coincides with its counterpart defined for the reduced problem~\eqref{redNCP}. Moreover, the Lagrange multiplier sets $\Lambda(\xb)$ and $\mathcal{M}(\xb)$, of~\eqref{NCP} and~\eqref{redNCP} respectively, both associated with $\xb$, satisfy the relation
\[
	\Lambda(\xb)=D\Xi(G(\xb))^*[\mathcal{M}(\xb)].
\]

This shows that the results we obtain for the reducible problem \eqref{redNCP} may be translated to \eqref{NCP} in a standard way. Now, a key element in our analysis is the faces of $\C$, which we recall the definition as follows:

\begin{definition}[Faces of a convex set]
Let $\C\subseteq\E$ be a closed convex set. We say that $F$ is a \emph{face} of $\mathcal{C}$ when $F$ is a convex subset of $\C$ such that for every $y\in F$ and every $z,w\in \mathcal{C}$ such that $y=\alpha z + (1-\alpha)w$ for some $\alpha\in (0,1)$, we have that $z,w\in F$.  We use the notation $F\faceq \mathcal{C}$ to say that $F$ is a face of $\mathcal{C}$. 
\end{definition}

We recall also that given a set $C\subseteq \C$, the \emph{minimal face} associated with $C$, denoted by $\Fmin(C)$, is defined as the smallest face of $\C$ that contains $C$. We also recall two very useful properties about minimal faces from Pataki~\cite{pataki-geometry}:

\begin{lemma}[Proposition 3.2.2 of~\cite{pataki-geometry}]\label{lem:pataki}
Let $\C\subseteq\E$ be a closed convex cone, $F$ be a face of $\C$, and $C\subseteq \C$ be a convex set. Then:
\begin{enumerate}
\item $F=\Fmin(C)$ if, and only if, $\ri(F)\cap \ri(C)\neq \emptyset$;
\item If $C\subseteq F$ and $C\cap \ri(F)\neq \emptyset$, then $F=\Fmin(C)$.
\end{enumerate}
\end{lemma}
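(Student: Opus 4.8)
The plan is to isolate a single ``relative-interior absorption'' property of faces and derive both items from it. Call it property $(\star)$: if $F\faceq\C$ and $y\in F\cap\ri(C)$ for some nonempty convex set $C\subseteq\C$, then $C\subseteq F$. To prove $(\star)$ I would fix an arbitrary $z\in C$ and invoke the prolongation property of relative interiors: since $y\in\ri(C)$ there is $\varepsilon>0$ with $w\doteq y+\varepsilon(y-z)\in C\subseteq\C$. Then $y=\tfrac{1}{1+\varepsilon}w+\tfrac{\varepsilon}{1+\varepsilon}z$ is a strict convex combination of the two points $w,z\in\C$, so the face property applied to $y\in F$ forces $z,w\in F$; as $z$ was arbitrary, $C\subseteq F$.

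Granting $(\star)$, item 2 is immediate. Since $C\subseteq F$ and $F$ is a face, $\Fmin(C)\subseteq F$; write $G\doteq\Fmin(C)$. Picking $y\in C\cap\ri(F)$, we have $y\in C\subseteq G$, hence $y\in G\cap\ri(F)$, and applying $(\star)$ with the face $G$ and the convex set $F$ yields $F\subseteq G$. Therefore $F=G=\Fmin(C)$. The backward implication of item 1 follows the same pattern: from $y\in\ri(F)\cap\ri(C)$ we get $y\in F\cap\ri(C)$, so $(\star)$ gives $C\subseteq F$, and then $y\in C\cap\ri(F)$ lets us invoke item 2 to conclude $F=\Fmin(C)$.

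For the forward implication of item 1 I would first reduce to a single point. Fixing any $y\in\ri(C)$ (nonempty because $C$ is a nonempty convex set in a finite-dimensional space), I claim $\Fmin(\{y\})=\Fmin(C)$: the inclusion $\Fmin(\{y\})\subseteq\Fmin(C)$ holds because $y\in C$, while applying $(\star)$ to the face $H\doteq\Fmin(\{y\})$ and the convex set $C$ (using $y\in H\cap\ri(C)$) gives $C\subseteq H$, whence $\Fmin(C)\subseteq H$; thus $\Fmin(\{y\})=\Fmin(C)=F$. It then remains to show $y\in\ri(F)=\ri(\Fmin(\{y\}))$, which together with $y\in\ri(C)$ produces the desired nonempty intersection.

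The crux — and the step I expect to be the main obstacle — is precisely this last fact, that a point lies in the relative interior of its own minimal face. I would argue by contradiction: if $y\notin\ri(H)$ then $y$ is a relative boundary point of $H$, so by the proper supporting hyperplane theorem there is an affine hyperplane $\{z:\langle a,z\rangle=\beta\}$ with $\langle a,\cdot\rangle\leqslant\beta$ on $H$, $\langle a,y\rangle=\beta$, and $\langle a,z_0\rangle<\beta$ for some $z_0\in H$. Then $H'\doteq\{z\in H:\langle a,z\rangle=\beta\}$ is a face of $H$ (a convex-combination argument shows it absorbs segments of $H$, independently of whether $H$ is closed), it contains $y$, and it is proper since $z_0\notin H'$. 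Because the face relation is transitive — a face of a face of $\C$ is again a face of $\C$, a routine check from the definition — we obtain $H'\faceq\C$ with $y\in H'\subsetneq H$, contradicting the minimality of $H=\Fmin(\{y\})$. The two facts to record carefully are thus the proper supporting hyperplane theorem at a non-relative-interior point and the transitivity of faces; both are standard.
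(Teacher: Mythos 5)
Your proof is correct, and there is in fact nothing in the paper to compare it against: the paper states this lemma as an imported result (Proposition 3.2.2 of Pataki's chapter) and gives no proof, so what you have produced is a self-contained verification of the citation. Your property $(\star)$ is precisely the classical fact that a face of $\C$ meeting the relative interior of a convex subset $C\subseteq\C$ must absorb all of $C$ (Rockafellar, Theorem 18.1), and your prolongation argument --- extend the segment from $z$ through $y\in\ri(C)$ to $w=y+\varepsilon(y-z)\in C$, then apply the face property to the strict combination $y=\tfrac{1}{1+\varepsilon}w+\tfrac{\varepsilon}{1+\varepsilon}z$ --- is the standard proof of it (the degenerate case $z=y$ is harmless, as the conclusion $z\in F$ is then trivial). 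The derivations of item 2 and of the backward half of item 1 from $(\star)$ check out, as does the reduction of the forward half to the single substantive claim $y\in\ri(\Fmin(\{y\}))$: the proper supporting hyperplane theorem does apply at $y\in H\setminus\ri(H)$ since $y\in H\subseteq\cl(H)$, the exposed set $H'=\{z\in H\colon\langle a,z\rangle=\beta\}$ is a face of $H$ by the usual strict-convex-combination computation, and transitivity of the face relation (a routine check, as you say) makes $H'$ a face of $\C$ containing $y$ and strictly contained in $H$, contradicting minimality. Two marginal observations are worth recording: the lemma implicitly requires $C\neq\emptyset$ (otherwise the forward direction of item 1 fails, since $\ri(\emptyset)=\emptyset$ while $\Fmin(\emptyset)$ is the smallest face of $\C$), which you correctly flagged when choosing $y\in\ri(C)$; and your argument never uses that $\C$ is closed, nor that it is a cone, so you have in fact proved the statement for an arbitrary convex set $\C$ in a finite-dimensional space --- slightly more general than what the paper needs.
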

In particular, $\Fmin(\{c\})$ is the unique face of $\C$ that contains $c$ in its relative interior. 
%
%

%

\section{Extending and improving the constant rank condition}

Our extended definition of the constant rank constraint qualification proposed in~\cite{CRCQfaces} (originally stated for NSDP and NSOCP) can be formulated as follows for the general problem~\eqref{NCP}, for any given point $\xb\in \Omega$ such that $\K$ is locally reducible to a cone $\C\subseteq \mathbb{F}$ in a neighborhood of $G(\xb)$ by some reduction mapping $\Xi$:

\begin{definition}[CRCQ]\label{def:crcq} The \emph{facial constant rank} (FCR) property holds at $\xb$ if there exists a neighborhood $\mathcal{V}$ of $\xb$ such that, for every $F\faceq\C$, the dimension of $D\G(x)^*[F^\perp]$ remains constant for every $x\in \mathcal{V}$. Furthermore, the \emph{constant rank constraint qualification} (CRCQ) holds at $\xb$ if it satisfies the facial constant rank property and, additionally, the set $H(\xb)$ defined in~\eqref{def:H} is closed. 
\end{definition}

Observe that the constant dimension required in Definition~\ref{def:crcq} need not be the same for different faces, but it must remain the same for different $x\in \mathcal{V}$, given any face of $\C$. 

Proving that CRCQ as in Definition~\ref{def:crcq} is indeed a CQ for~\eqref{NCP}, on the other hand, is not trivial from the proofs presented in~\cite[Theorems 4.1 and 5.1]{CRCQfaces}. But instead of proving directly that this extension of CRCQ is a constraint qualification, we shall first present and discuss a refined form of it and then, at the end of this section, we prove that this new refined condition is a constraint qualification which is a conclusion that naturally extends to Definition~\ref{def:crcq}. The core of such refinement lies in the fact that not all faces of $\C$ are necessarily needed; instead, only one face is required.

To get some intuition on which face is the good one, let us drive our attention to the particular case of NLP. That is, choose $\E=\R^m$ and $\K=\R^m_+$, and in this case problem~\eqref{NCP} takes the form:
\begin{equation}
	\begin{array}{ll}
		\nonumber\mbox{\textnormal{Minimize }} 	& f(x),\\ 
		\label{NLP}\mbox{s.t. }	& G(x)\doteq(g_1(x),\ldots,g_m(x))\in \R^m_+,
	\end{array}
	\tag{NLP}
\end{equation}
with the reduction mapping given by~\eqref{eq:nlpred} at some $\xb\in \Omega$. For the sake of simplicity, we will assume that $\mathcal{A}(\xb)=\{1,\ldots,m\}$ in the next paragraphs, so $\G=G$ and $\C=\K=\R^m_+$. In this case, CRCQ as in Definition~\ref{def:crcq} becomes equivalent to the constant rank of the family $\{\nabla g_i(x)\}_{i\in J}$ for $x$ in a neighborhood of $\xb$, for each $J\subseteq \{1,\ldots,m\}$, which follows from the fact that $F\faceq \R^m_+$ if, and only if,
\[
	F=\R^m_+\bigcap_{j\in J} \{e_j\}^\perp
\]
for some $J\subseteq \{1,\ldots,m\}$ and, moreover, $F$ and $J$ are in a one-to-one correspondence. Above, $e_j$ denotes the $j$-th vector of the canonical basis of $\R^m$. The requirement of taking every subset $J$ into consideration was relaxed in~\cite{cpg}, where the authors realized that only
\[
	J_-\doteq \{j\in \{1,\ldots,m\}\colon -\nabla g_j(\xb)\in L_\Omega(\xb)\pol\}
\]
was needed. This gave rise to a condition they called the \textit{constant rank of the subspace component} (CRSC)~\cite[Definition 1.3]{cpg}, which holds at $\xb$ when the family of vectors $\left\{ \nabla g_j(x) \right\}_{j\in J_-}$ remains with constant rank for every $x$ in a neighborhood of $\xb$. The CRSC condition was then studied by Kruger et al.~\cite{positivity} (under the name {\it relaxed Mangasarian-Fromovitz condition}), where they noticed that the set $J_-$ can be equivalently written as
\begin{equation}\label{eq:J-}
	J_-=\{j\in \{1,\ldots,m\}\colon \langle \nabla g_j(\xb), d \rangle=0, \ \forall d\in L_{\Omega}(\xb)\}.
\end{equation}
Then, for every $d\in L_{\Omega}(\xb)$ and every $j\in J_-$, we obtain from \eqref{eq:J-} that
\[
	\langle DG(\xb) d, e_j\rangle=\langle\nabla g_j(\xb),d\rangle=0,
\]
hence
\[
	DG(\xb)[L_{\Omega}(\xb)]\subseteq \R^m_+\bigcap_{j\in J_-} \{e_j\}^\perp\doteq F_{J_-},
\]
where $F_{J_-}$ is the face of $\R^m_+$ defined by the set $J_-$. Notice that for each $j\notin J_-$ there is some $d_j\in L_{\Omega}(\xb)$ such that $\langle DG(\xb) d_j, e_j\rangle>0$ so $d_{J_-}\doteq \sum_{j\in J_-} d_j\in L_{\Omega}(\xb)$ satisfies  $\langle DG(\xb) d_{J_-}, e_j\rangle>0$ for every $j\notin J_-$ and, consequently, 
\[
	DG(\xb)d_{J_-}\in \ri(F_{J_-})\cap DG(\xb)[L_{\Omega}(\xb)]. 
\]
Thus, we conclude from Lemma \ref{lem:pataki} that
\[
	F_{J_-}=\Fmin\left(DG(\xb) [L_{\Omega}(\xb)]\right).
\] 

Back to the general problem~\eqref{NCP} and its reduced counterpart \eqref{redNCP}, we consider the subset $\Gamma_\C(\xb)$ of $\C$ given by
\begin{equation}\label{def:gamma}
	\Gamma_{\C}(\xb)\doteq D\G(\xb)[L_{\Omega}(\xb)]
\end{equation}
and, based on the previous discussion, we consider the face $F_{J_-}\doteq \Fmin(\Gamma_{\C}(\xb))\faceq \C$. Then, we can define an extension of the CRSC condition to the context of NCP as follows:

\begin{definition}[CRSC]\label{def:crsc}
We say that the \emph{constant rank of the subspace component} (CRSC) condition for \eqref{NCP} holds at $\xb\in\F$ if $H(\xb)$ is closed and there exists a neighborhood $\mathcal{V}$ of $\xb$ such that the dimension of $D\G(x)^*[F_{J_-}^\perp]$ remains constant for every $x\in \mathcal{V}$, where $F_{J_-}\doteq \Fmin(\Gamma_{\C}(\xb))\faceq \C$.
\end{definition}

From our previous discussion, it follows immediately that Definition~\ref{def:crsc} fully recovers the CRSC condition from NLP when it is seen as a particular case of~\eqref{NCP}, since $H(\xb)$ is always closed in this case. Moreover, CRSC as in Definition~\ref{def:crsc} is clearly implied by CRCQ but the following example shows that this implication is strict (that is, CRSC does not imply CRCQ).


\begin{example}\label{ex:crscnotcrcq}
Let $\K\doteq \S^2_+$ and $\bar{x}\doteq (0,0)\in \R^2$ and consider the NSDP constraint:
\[
	G(x)\doteq \begin{bmatrix}
	x_1 & x_1 x_2 \\
	x_1 x_2 & x_2^2+x_2
	\end{bmatrix}\in \S^2_+
\]
which is reducible at $\xb$ via the identity function with $\C=\K$ and $\G=G$, since $G(\bar{x})=0$ is already at the vertex of $\S^2_+$. With respect to this choice of reduction, we will show that CRSC holds true at $\bar{x}$. To do so, observe that the linear functional $D\G(\xb)$ satisfies the following:
\[
	D\G(\xb)d = \begin{bmatrix} d_1 & 0 \\ 0 & d_2\end{bmatrix}
\]
for each $d\doteq(d_1,d_2)\in\R^2$. It follows that $L_{\Omega}(\xb)=\left\{d\in \R^2\colon D\G(\bar{x})d\in\S^2_+\right\} = \R^2_+$
and
\[
	\Gamma_{\C}(\xb) = D\G(\xb)[L_\Omega(\bar{x})] = \left\{
		\begin{bmatrix} d_1 & 0 \\ 0 & d_2\end{bmatrix}\in \S^2 \colon d_1\geq 0, d_2\geq 0	
	\right\}
\]
so $F_{J_-}=F_{\min}(\Gamma_{\C}(\xb))=\S^2_+$. Therefore, for every $x\in\R^2$, we have $D\G(x)^*[F_{J_-}^\perp] = \{0\}$, which has constant dimension. Now it suffices to note that $H(\bar{x})=DG(\xb)^*[\K\pol] = \{(z_1, z_2)\in\R^2\colon z_1\leq 0, z_2\leq 0\}$ is closed. On the other hand, let
\[
	F\doteq \left\{
		\begin{bmatrix} z & 0 \\ 0 & 0\end{bmatrix}\in \S^2 \colon  z\geq 0\right\}\faceq \S^2_+
\]
and we have that 
\[
D\G(x)^*[F^\perp] = \{(\alpha x_2, \ \alpha x_1+\beta(2x_2+1))\in \R^2\colon \alpha,\beta\in\R \}
\]
so $\dim(D\G(\xb)^*[F^\perp])=1$ but $\dim(D\G(x)^*[F^\perp])=2$ whenever $x_2\neq 0$. Thus, CRCQ does not hold at $\xb$.
\end{example}
 
Moreover, observe that $\xb$ from Example~\ref{ex:crscnotcrcq} also satisfies Robinson's CQ, so it can also be used to show that Robinson's CQ does not imply CRCQ. The main advantage of CRSC with respect to CRCQ is the fact that is depends on a single well-defined face of $\C$ instead of all faces. Moreover, observe that the linearized cone $L_\Omega(\xb)$ is simply the feasible set of the ``linearized problem'':
\begin{equation}
\begin{array}{ll}
		\underset{d\in \R^n}{\nonumber\mbox{\textnormal{Minimize }}} 	& \langle \nabla f(\xb), d\rangle\\ 
		\label{linredNCP}\mbox{subject to}	& D\G(\xb)d\in \C
	\end{array}
\tag{LinRed-NCP}
\end{equation}
around $\xb$, and that $F_{J_-}$ is the minimal face of $\C$ that represents~\eqref{linredNCP}, in the sense that the constraint $D\G(\xb)d\in \C$ could be replaced with $D\G(\xb)d\in F_{J_-}$ without changing its solution set. There are theoretical algorithms to compute such a face which are commonly found in the \textit{facial reduction} literature -- see, for instance,~\cite{pataki2013}. We explore this connection a bit further in Section~\ref{sec:fr}.

Next, we present a useful lemma that, although seems merely technical at this point, will inspire a deeper discussion later in Section~\ref{sec:fr}. To introduce it, recall that $L_{\Omega}(\xb)= D\G(\xb)^{-1}(\C)$ is the linearized cone of both~\eqref{NCP} and
 its reduced version~\eqref{redNCP} at $\xb$; the lemma states that $\C$ may be replaced with $F_{J_-}$ in the definition of $L_{\Omega}(\xb)$.

\begin{lemma}\label{lem:frlin}
Let $\xb\in \Omega$ and $F_{J_-}\doteq \Fmin(\Gamma_{\C}(\xb))\faceq \C$. Then, $L_{\Omega}(\xb)=D\G(\xb)^{-1}(F_{J_-})$.
\end{lemma}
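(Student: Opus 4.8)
The plan is to establish the claimed set equality by proving the two inclusions separately, since both turn out to follow almost immediately once the relevant definitions are unwound. Throughout I would rely on the identity $L_{\Omega}(\xb)=D\G(\xb)^{-1}(\C)$, which holds because $\G(\xb)=0$ forces $T_\C(\G(\xb))=\C$, together with the defining relation $\Gamma_\C(\xb)=D\G(\xb)[L_\Omega(\xb)]$ and the fact that $F_{J_-}=\Fmin(\Gamma_\C(\xb))$ is, by construction, a face of $\C$ containing $\Gamma_\C(\xb)$.

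For the inclusion $D\G(\xb)^{-1}(F_{J_-})\subseteq L_\Omega(\xb)$, I would simply observe that $F_{J_-}\faceq\C$ entails $F_{J_-}\subseteq\C$, so any $d$ with $D\G(\xb)[d]\in F_{J_-}$ automatically satisfies $D\G(\xb)[d]\in\C$, i.e. $d\in D\G(\xb)^{-1}(\C)=L_\Omega(\xb)$.

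For the reverse inclusion $L_\Omega(\xb)\subseteq D\G(\xb)^{-1}(F_{J_-})$, I would take an arbitrary $d\in L_\Omega(\xb)$ and note that $D\G(\xb)[d]$ lies in the image $D\G(\xb)[L_\Omega(\xb)]=\Gamma_\C(\xb)$. Since $F_{J_-}$ is defined as the minimal face of $\C$ containing $\Gamma_\C(\xb)$, we have $\Gamma_\C(\xb)\subseteq F_{J_-}$, whence $D\G(\xb)[d]\in F_{J_-}$ and therefore $d\in D\G(\xb)^{-1}(F_{J_-})$. Combining the two inclusions yields the asserted equality.

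I do not anticipate any genuine obstacle: the statement is essentially a tautological consequence of $F_{J_-}$ being a face of $\C$ squeezed between $\Gamma_\C(\xb)$ and $\C$. The only point deserving a moment's care is the justification that $L_\Omega(\xb)=D\G(\xb)^{-1}(\C)$ — so that the image $\Gamma_\C(\xb)$ indeed sits inside $\C$ and the minimal face $\Fmin(\Gamma_\C(\xb))$ is well defined — but this was already recorded in the preceding discussion through the identity $T_\C(\G(\xb))=\C$.
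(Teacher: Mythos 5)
Your proof is correct and follows essentially the same route as the paper: the forward inclusion $L_\Omega(\xb)\subseteq D\G(\xb)^{-1}(F_{J_-})$ from $\Gamma_\C(\xb)=D\G(\xb)[L_\Omega(\xb)]\subseteq F_{J_-}$ (minimality of the face), and the reverse from $F_{J_-}\subseteq\C$. Your element-wise phrasing is if anything cleaner than the paper's chain of set-level image/preimage containments, but the underlying argument is identical.
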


\begin{proof}
By definition, it holds that $\Gamma_{\C}(\xb)\subseteq F_{J_-}$, which implies
\[
	D\G(\xb)^{-1}(D\G(\xb)[D\G(\xb)^{-1}(\C)])\subseteq D\G(\xb)^{-1}(F_{J_-});
\] and on the other hand we have
\[
	D\G(\xb)^{-1}(\C)\subseteq D\G(\xb)^{-1}(D\G(\xb)[D\G(\xb)^{-1}(\C)]),
\]
which holds trivially. 
 Combining the above statements, we get $L_{\Omega}(\xb)=D\G(\xb)^{-1}(\C)\subseteq D\G(\xb)^{-1}(F_{J_-})$, and the reverse inclusion follows directly from the fact $F_{J_-}\subseteq \C$. 
\end{proof}

In order to prove that CRSC (and, consequently, CRCQ) is a constraint qualification for~\eqref{NCP}, we will need some tools. The first one is an extension of a result by Andreani et al.~\cite[Proposition 3.1]{aes2010}, which is in turn a simplified version of a result originally presented by Janin~\cite[Proposition 2.2]{crcq}. The statement is as follows:

\begin{proposition}(\cite[Proposition 3.1]{aes2010})\label{prop1} Let $\{\zeta_{i}(x)\}_{i \in \mathcal{I}}$ be a finite family of differentiable functions $\zeta_i\colon \R^n\to \R$, $i\in \mathcal{I}$, such that the family of its gradients $\{\nabla \zeta_{i}(x)\}_{i \in \mathcal{I}}$ remains with constant rank in a neighborhood of $\xb$, and consider the linear subspace 
\begin{center}
$\mathcal{S} \doteq \left\{y \in \R^{n}\colon \langle \nabla \zeta_{i}(x), y\rangle = 0,  \ i \in \mathcal{I}\right\}$.
\end{center}
Then, there exists some neighborhoods $V_{1}$ and $V_{2}$ of $\xb$, and a diffeomorphism $\psi: V_{1} \rightarrow V_{2}$, such that:

\begin{itemize}
\item[(i)] $\psi(\xb) = \xb$;
\item[(ii)] $D\psi(\xb)=\mathbb{I}_n$;
\item[(iii)] $\zeta_{i}(\psi^{-1}(\xb+y))=\zeta_{i}(\psi^{-1}(\xb))$ for every $y \in \mathcal{S}\cap (V_2-\xb)$ and every $i\in \mathcal{I}$.
\end{itemize}
\end{proposition}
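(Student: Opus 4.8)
The plan is to reduce the whole family to a maximal linearly independent subfamily of gradients, straighten the resulting common level sets with the inverse function theorem, and then normalize the straightening map so that its derivative at $\bar{x}$ is the identity. Throughout, the subspace $\mathcal{S}$ should be read as $\{y\in\R^n : \langle \nabla\zeta_i(\bar{x}),y\rangle=0,\ i\in\mathcal{I}\}$, which is the common kernel of the gradients at $\bar{x}$.

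First I would fix $r\doteq\rank\{\nabla\zeta_i(\bar{x})\}_{i\in\mathcal{I}}$ and choose $\mathcal{I}_0=\{i_1,\dots,i_r\}\subseteq\mathcal{I}$ so that $\{\nabla\zeta_{i_k}(\bar{x})\}_{k=1}^r$ is linearly independent. By continuity of the gradients together with the constant rank hypothesis, after shrinking to a neighborhood $V$ of $\bar{x}$ the family $\{\nabla\zeta_{i_k}(x)\}_{k=1}^r$ stays linearly independent and spans $\Sp\{\nabla\zeta_i(x)\}_{i\in\mathcal{I}}$ for every $x\in V$; in particular $\nabla\zeta_j(x)\in\Sp\{\nabla\zeta_{i_k}(x)\}_{k=1}^r$ for all $j\in\mathcal{I}$ and all $x\in V$. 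Note that $\mathcal{S}$ is still defined by all indices, but since the extra gradients are combinations of the chosen ones at $\bar{x}$, $\mathcal{S}$ coincides with the common kernel of $\{\nabla\zeta_{i_k}(\bar{x})\}_{k=1}^r$, so $\dim\mathcal{S}=n-r$.

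Second, I build the diffeomorphism. Picking an orthonormal basis $w_1,\dots,w_{n-r}$ of $\mathcal{S}$, define $h\colon V\to\R^n$ with components $\zeta_{i_1}(x),\dots,\zeta_{i_r}(x),\langle w_1,x\rangle,\dots,\langle w_{n-r},x\rangle$. Its derivative $A\doteq Dh(\bar{x})$ sends $v$ to $\big((\langle\nabla\zeta_{i_k}(\bar{x}),v\rangle)_{k},(\langle w_j,v\rangle)_{j}\big)$ and is invertible, because the first block of rows spans $\mathcal{S}^\perp$ and the second spans $\mathcal{S}$, so the $n$ rows are jointly independent. By the inverse function theorem $h$ is a diffeomorphism from a smaller neighborhood $V_1$ onto $V_2\doteq h(V_1)$; I then set $\psi\doteq\bar{x}+A^{-1}(h-h(\bar{x}))$, which gives $\psi(\bar{x})=\bar{x}$ and $D\psi(\bar{x})=A^{-1}A=\I_n$, establishing (i) and (ii). For the final step I keep $V_2$ convex (e.g. a ball), so that $\{\bar{x}+y : y\in\mathcal{S}\cap(V_2-\bar{x})\}$ is connected.

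Third, I verify (iii). Writing $x=\psi^{-1}(\bar{x}+y)$ gives $A(\psi(x)-\bar{x})=h(x)-h(\bar{x})$, i.e. $h(x)-h(\bar{x})=Ay$. For $y\in\mathcal{S}$ the first $r$ entries of $Ay$ vanish, since $\langle\nabla\zeta_{i_k}(\bar{x}),y\rangle=0$; hence $\zeta_{i_k}(x)=\zeta_{i_k}(\bar{x})=\zeta_{i_k}(\psi^{-1}(\bar{x}))$ for every $k$, which settles (iii) for $i\in\mathcal{I}_0$. For $j\in\mathcal{I}\setminus\mathcal{I}_0$ I invoke the containment $\nabla\zeta_j\in\Sp\{\nabla\zeta_{i_k}\}$ on $V$: on the common level set $M\doteq\{z\in V_1 : \zeta_{i_k}(z)=\zeta_{i_k}(\bar{x}),\ \forall k\}$, every tangent direction annihilates each $\nabla\zeta_{i_k}$ and therefore also $\nabla\zeta_j$, so $\zeta_j$ is constant on the connected component of $M$ containing $\bar{x}$. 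Since $\{\psi^{-1}(\bar{x}+y):y\in\mathcal{S}\cap(V_2-\bar{x})\}$ is the $\psi^{-1}$-image of a connected neighborhood of $\bar{x}$ in $\bar{x}+\mathcal{S}$, it is connected, contains $\bar{x}$, and lies in $M$ by the previous paragraph; hence $\zeta_j\equiv\zeta_j(\bar{x})$ there as well.

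I expect the main obstacle to be exactly this last transfer of constancy from the independent subfamily $\mathcal{I}_0$ to the remaining indices. This is precisely where the constant rank hypothesis is indispensable: mere independence at $\bar{x}$ would not force $\nabla\zeta_j\in\Sp\{\nabla\zeta_{i_k}\}$ \emph{throughout} $V$, and without that containment one cannot upgrade ``$\zeta_j$ constant at $\bar{x}$'' to ``$\zeta_j$ constant along the whole level-set slice.'' Secondary care is needed in bookkeeping the nested neighborhoods (the constant rank neighborhood, the domain of the inverse function theorem, and $V_2$) and in keeping $V_2$ shaped so that the relevant slice of $\bar{x}+\mathcal{S}$ is connected, which is what legitimizes the connectedness argument.
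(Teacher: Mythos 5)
Your proof is correct and essentially reconstructs the argument behind the cited result \cite[Proposition 3.1]{aes2010} (the paper states this proposition without proof): straighten a maximal linearly independent subfamily via the inverse function theorem augmented with linear coordinates on $\mathcal{S}$, normalize affinely so that $D\psi(\xb)=\I_n$, and use the constant rank hypothesis to propagate constancy from the independent subfamily to the dependent indices along the level-set slice. The only fix needed is bookkeeping: $V_2$ must be $\psi(V_1)$ rather than $h(V_1)$ (shrink $V_2$ to a ball and reset $V_1=\psi^{-1}(V_2)$ so your connectedness argument applies), and note that the inverse function theorem and your tangent-space argument tacitly upgrade ``differentiable'' to continuously differentiable, which is the hypothesis actually in force in the cited reference and in every application in this paper.
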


To make this result suitable for the conic environment, we will extend it as follows:

\begin{lemma}[Curve builder]\label{extra:curvebuilder}
Let $\G\colon \R^n\to \sF$ be twice differentiable, and let $\mathcal{W}\subseteq \sF$ be any subspace with dimension $N$. Also, let $\xb,d\in \R^n$ be such that $D\G(\xb)d\in \mathcal{W}^\perp$. If there exists a neighborhood $\mathcal{V}$ of $\xb$ such that $D\G(x)\adj[\mathcal{W}]$ remains with constant dimension for every $x\in \mathcal{V}$, then there exists a twice differentiable curve $\xi\colon \R\to \R^n$ such that $\langle \G(\xi(t)), V\rangle=\langle \G(\xb), V\rangle$ for every $t$ small enough and every $V\in \mathcal{W}$; moreover, $\xi(0)=\xb$ and $\xi'(0)=d$.
\end{lemma}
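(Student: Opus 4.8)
The plan is to reduce this statement to the constant-rank diffeomorphism of Proposition~\ref{prop1} by encoding the subspace $\mathcal{W}$ through a finite family of scalar constraints. First I would fix an orthonormal basis $\{W_1,\dots,W_N\}$ of $\mathcal{W}$ and define $\zeta_i\colon \R^n\to\R$ by $\zeta_i(x)\doteq\langle \G(x),W_i\rangle$ for $i\in\mathcal{I}\doteq\{1,\dots,N\}$. Since $\G$ is twice differentiable, so is each $\zeta_i$, and a direct computation gives $\nabla\zeta_i(x)=D\G(x)\adj[W_i]$. Hence the span of $\{\nabla\zeta_i(x)\}_{i\in\mathcal{I}}$ is exactly $D\G(x)\adj[\mathcal{W}]$, so the rank of this gradient family equals $\dim D\G(x)\adj[\mathcal{W}]$, which is constant on $\mathcal{V}$ by hypothesis. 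Thus the family $\{\zeta_i\}_{i\in\mathcal{I}}$ satisfies the constant-rank assumption of Proposition~\ref{prop1} at $\xb$.

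Next I would identify the subspace $\mathcal{S}$ appearing in Proposition~\ref{prop1}. Using $\langle\nabla\zeta_i(\xb),y\rangle=\langle W_i,D\G(\xb)y\rangle$, one sees that $\mathcal{S}=\{y\colon D\G(\xb)y\in\mathcal{W}^\perp\}=D\G(\xb)^{-1}(\mathcal{W}^\perp)$. The crucial observation is that the hypothesis $D\G(\xb)d\in\mathcal{W}^\perp$ says precisely $d\in\mathcal{S}$. Proposition~\ref{prop1} then yields neighborhoods $V_1,V_2$ of $\xb$ and a diffeomorphism $\psi\colon V_1\to V_2$ with $\psi(\xb)=\xb$, $D\psi(\xb)=\mathbb{I}_n$, and $\zeta_i(\psi^{-1}(\xb+y))=\zeta_i(\xb)$ for all $y\in\mathcal{S}\cap(V_2-\xb)$ and all $i\in\mathcal{I}$.

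With this in hand, I would define the curve $\xi(t)\doteq\psi^{-1}(\xb+td)$. Then $\xi(0)=\psi^{-1}(\xb)=\xb$, and since $D\psi(\xb)=\mathbb{I}_n$ forces $D\psi^{-1}(\xb)=\mathbb{I}_n$, the chain rule yields $\xi'(0)=d$. For $|t|$ small enough we have $td\in\mathcal{S}\cap(V_2-\xb)$, because $\mathcal{S}$ is a subspace containing $d$ and $\xb+td\in V_2$; hence property (iii) gives $\langle\G(\xi(t)),W_i\rangle=\zeta_i(\xi(t))=\zeta_i(\xb)=\langle\G(\xb),W_i\rangle$ for every $i$. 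Since the $W_i$ span $\mathcal{W}$, this is exactly $\langle\G(\xi(t)),V\rangle=\langle\G(\xb),V\rangle$ for all $V\in\mathcal{W}$, as required.

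The only delicate point is the regularity of $\xi$: to obtain a twice differentiable curve I need $\psi^{-1}$ to be twice differentiable, which in turn requires the diffeomorphism supplied by Proposition~\ref{prop1} to inherit the $C^2$ smoothness of the $\zeta_i$ (and hence of $\G$). I expect this to be the main technical obstacle, resolved by noting that the construction underlying Proposition~\ref{prop1} (via Janin~\cite{crcq} and~\cite{aes2010}) produces a diffeomorphism of the same differentiability class as the defining functions. Granting this, $\xi$ is the composition of the twice differentiable map $\psi^{-1}$ with the affine map $t\mapsto\xb+td$, and is therefore twice differentiable, completing the argument.
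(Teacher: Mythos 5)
Your proof is correct and takes essentially the same route as the paper's: both encode $\mathcal{W}$ through the scalar functions $\zeta_i(x)\doteq\langle \G(x),\eta_i\rangle$ for a basis of $\mathcal{W}$, observe that the constant-dimension hypothesis is exactly the constant-rank hypothesis of Proposition~\ref{prop1}, check $d\in\mathcal{S}$, and set $\xi(t)\doteq\psi^{-1}(\xb+td)$. The regularity issue you flag at the end is resolved in the paper in precisely the way you anticipate, by citing~\cite{minch} for the fact that the diffeomorphism inherits the differentiability class of $\G$.
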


\begin{proof}
Let $\eta_1,\ldots,\eta_N$ be a basis of $\mathcal{W}$, and note that
\begin{equation}\label{eq:rank}
	D\G(x)\adj [\mathcal{W}]=
	\spn\left(\left\{
		D\G(x)\adj[\eta_i]\right\}_{i\in \{1,\ldots,N\}}\right).
\end{equation}
Therefore, the hypothesis on the constant dimension of $D\G(x)\adj[\mathcal{W}]$ can be equivalently stated as the constant rank of the family 
\[\left\{
		D{\G}(x)\adj[\eta_i]\right\}_{i\in \{1,\ldots,N\}}\]
for $x$ in a neighborhood of $\xb$. Furthermore, let 
\[
	\zeta_i(x)\doteq\langle \G(x),\eta_i\rangle
\]
and note that
\[
	\nabla \zeta_i(x)=D{\G}(x)\adj[\eta_i]
\] for every $i\in \{1,\ldots,N\}$. Then, by Proposition~\ref{prop1}, there exist neighborhoods $V_1$ and $V_2$ of $\xb$, and a curve $\psi\colon V_1\to V_2$ such that:
 \begin{itemize}
 \item $\psi(\xb)=\xb$;
 \item $D\psi(\xb)=\I_n$;
 \item $\zeta_i(\psi^{-1}(\xb+y))=\zeta_i(\xb)$ for every $i\in \{1,\ldots,N\}$ and every $y\in \mathcal{S}\cap (V_2-\xb)$;
 \end{itemize}
 where 
\[
	\mathcal{S}\doteq\left\{
		y\in \R^n\colon \langle \nabla \zeta_i(\xb), y\rangle=0, \ \forall i\in \{1,\ldots,N\}
	\right\}.
\]
Since $D{\G}(\xb)d\in \mathcal{W}^\perp$, we see that 
\[
	\langle d, D{\G}(\xb)\adj[\eta_i]\rangle=\langle D{\G}(\xb)d,\eta_i\rangle=0
\]
for every $i\in\{1,\ldots,N\}$, so $d\in \mathcal{S}.$ Then, define $\xi(t)\doteq\psi^{-1}(\xb+td)$ for every $t$ such that $\xb+td\in V_2$, and note that $\xi'(0)=d$ and $\xi(0)=\xb$. For every such $t$, we have
$\langle{\G}(\xi(t)),\eta_i\rangle=\langle{\G}(\xb),\eta_i\rangle$ for every $i\in \{1,\ldots,N\}$. Moreover, the degree of differentiability of $\xi$ is the same as the one of $\G$, which is a fact that follows from~\cite[Page 328]{minch}.
\end{proof}

%
%

Now, we recall a classical lemma:

\begin{lemma}[Proposition 2.1.12 of~\cite{urruty}]\label{lem:preimage-ri}
Let $\mathcal{L}$ be a linear mapping and let $C$ be a convex set such that $\mathcal{L}^{-1}(\ri(C))\neq \emptyset$. Then, $\ri(\mathcal{L}^{-1}(C))=\mathcal{L}^{-1}(\ri(C))$.
\end{lemma}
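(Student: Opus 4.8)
The plan is to prove the two inclusions separately, relying on two standard facts about relative interiors of convex sets, which I would take for granted: the \emph{prolongation principle} (for a convex set $S$ and a point $z\in S$, one has $z\in\ri(S)$ if and only if for every $x\in S$ there exists $\mu>1$ with $\mu z-(\mu-1)x\in S$), and the \emph{line segment principle} (if $z\in\ri(S)$ and $x\in\cl(S)$, then $\lambda z+(1-\lambda)x\in\ri(S)$ for all $\lambda\in(0,1]$). First I would record that $\mathcal{L}^{-1}(C)$ is convex, being the preimage of a convex set under a linear map, so that both principles apply to it.

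For the inclusion $\mathcal{L}^{-1}(\ri(C))\subseteq\ri(\mathcal{L}^{-1}(C))$, take $x$ with $\mathcal{L}(x)\in\ri(C)$; then $x\in\mathcal{L}^{-1}(C)$. To verify $x\in\ri(\mathcal{L}^{-1}(C))$ through the prolongation principle, fix any $y\in\mathcal{L}^{-1}(C)$, so that $\mathcal{L}(y)\in C$. Applying the prolongation principle to $C$ at the point $\mathcal{L}(x)\in\ri(C)$ relative to $\mathcal{L}(y)\in C$ yields $\mu>1$ with $\mu\mathcal{L}(x)-(\mu-1)\mathcal{L}(y)\in C$. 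By linearity this reads $\mathcal{L}(\mu x-(\mu-1)y)\in C$, i.e.\ $\mu x-(\mu-1)y\in\mathcal{L}^{-1}(C)$. Since $y$ was arbitrary, the prolongation principle applied to $\mathcal{L}^{-1}(C)$ gives $x\in\ri(\mathcal{L}^{-1}(C))$.

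For the reverse inclusion $\ri(\mathcal{L}^{-1}(C))\subseteq\mathcal{L}^{-1}(\ri(C))$, I would pick a reference point $x_0\in\mathcal{L}^{-1}(\ri(C))$, which exists precisely by the hypothesis $\mathcal{L}^{-1}(\ri(C))\neq\emptyset$, so $\mathcal{L}(x_0)\in\ri(C)$. Given $x\in\ri(\mathcal{L}^{-1}(C))$, I apply the prolongation principle to $\mathcal{L}^{-1}(C)$ at $x$ relative to $x_0$: there is $\mu>1$ with $w\doteq\mu x-(\mu-1)x_0\in\mathcal{L}^{-1}(C)$, hence $\mathcal{L}(w)\in C$. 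Setting $\lambda\doteq 1/\mu\in(0,1)$ and rearranging gives $\mathcal{L}(x)=\lambda\mathcal{L}(w)+(1-\lambda)\mathcal{L}(x_0)$, a convex combination with positive weights of $\mathcal{L}(w)\in C\subseteq\cl(C)$ and $\mathcal{L}(x_0)\in\ri(C)$ (the weight $1-\lambda$ on the relative-interior point lies in $(0,1)$). The line segment principle then forces $\mathcal{L}(x)\in\ri(C)$, that is, $x\in\mathcal{L}^{-1}(\ri(C))$.

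The two inclusions together establish the claim. I expect the only real subtlety to be this second inclusion: without a point mapping into $\ri(C)$ there is no anchor from which to run the line segment principle, and indeed the identity can fail when $\mathcal{L}^{-1}(\ri(C))=\emptyset$; the hypothesis serves exactly to supply the anchor $x_0$. Everything else is a direct transcription of the prolongation and line segment principles through the linearity of $\mathcal{L}$.
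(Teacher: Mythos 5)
Your proof is correct. Note, however, that the paper contains no proof of this lemma to compare against: it is quoted as Proposition 2.1.12 of Hiriart-Urruty and Lemar\'echal and used as a black box (its only role in the paper is to yield Lemma~\ref{lem:riL} via Lemmas~\ref{lem:frlin} and~\ref{lem:crsc-relaxrob}). So the relevant comparison is with the standard textbook derivation, which your argument genuinely departs from: the classical route (e.g., Rockafellar, Theorem~6.7) obtains the preimage formula by combining the forward-image rule $\ri(\mathcal{L}(D))=\mathcal{L}(\ri(D))$ with the intersection rule $\ri(D_1\cap D_2)=\ri(D_1)\cap\ri(D_2)$ applied to the graph of $\mathcal{L}$ intersected with $\R^n\times C$, the hypothesis $\mathcal{L}^{-1}(\ri(C))\neq\emptyset$ being exactly what licenses the intersection rule. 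You instead run the prolongation and line-segment principles directly through $\mathcal{L}$, which is more elementary (no auxiliary image or intersection theorems) and makes two things visible that the textbook route obscures: the inclusion $\mathcal{L}^{-1}(\ri(C))\subseteq\ri(\mathcal{L}^{-1}(C))$ holds unconditionally, while the reverse inclusion needs the anchor $x_0$ supplied by the hypothesis --- and your counterexample-flavored remark about failure when $\mathcal{L}^{-1}(\ri(C))=\emptyset$ is accurate (take $\mathcal{L}=0$ and $C=[0,1]$, so $\ri(\mathcal{L}^{-1}(C))=\R$ but $\mathcal{L}^{-1}(\ri(C))=\emptyset$). All applications of the two principles check out: in the first inclusion the set $\mathcal{L}^{-1}(C)$ is nonempty and convex, so the ``if'' direction of prolongation applies; in the second, the weight $1-\lambda=1-1/\mu$ on the relative-interior point $\mathcal{L}(x_0)$ indeed lies in $(0,1)$, and algebra confirms $x=\lambda w+(1-\lambda)x_0$. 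The only implicit restriction is finite dimensionality (the ``if'' direction of the prolongation principle is a finite-dimensional fact), which matches both the paper's setting and the cited source.
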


%

The previous lemma allows us to compute the relative interior of $L_{\Omega}(\xb)$ by considering $\mathcal{L}=D\G(\xb)$ and $C=F_{J_-}$. Indeed, this is possible due to the next lemma:

\begin{lemma}\label{lem:crsc-relaxrob}
For every $\xb\in \Omega$, there exists some $d\in D\G(\xb)^{-1}(\C)$ such that $D\G(\xb)d\in \ri(F_{J_-})$.
\end{lemma}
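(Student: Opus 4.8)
The goal is to produce a single direction $d$ in the linearized cone whose image under $D\G(\xb)$ lands in the \emph{relative interior} of the minimal face $F_{J_-}=\Fmin(\Gamma_\C(\xb))$. Since $F_{J_-}$ is by definition the smallest face containing $\Gamma_\C(\xb)=D\G(\xb)[L_\Omega(\xb)]$, the first part of Lemma~\ref{lem:pataki} tells us that $\ri(F_{J_-})\cap\ri(\Gamma_\C(\xb))\neq\emptyset$; that is, there already exists \emph{some} point of $\Gamma_\C(\xb)$ lying in $\ri(F_{J_-})$. The plan is simply to unwind what this means: a point of $\Gamma_\C(\xb)$ is by construction of the form $D\G(\xb)d$ for some $d\in L_\Omega(\xb)=D\G(\xb)^{-1}(\C)$, and such a $d$ is exactly what the statement asks for.

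\begin{proof}
Recall that $F_{J_-}\doteq\Fmin(\Gamma_\C(\xb))$ is the smallest face of $\C$ containing the convex set $\Gamma_\C(\xb)=D\G(\xb)[L_\Omega(\xb)]$. By the first item of Lemma~\ref{lem:pataki}, applied with $C=\Gamma_\C(\xb)$ and $F=F_{J_-}$, the condition $F_{J_-}=\Fmin(\Gamma_\C(\xb))$ is equivalent to
\[
	\ri(F_{J_-})\cap\ri(\Gamma_\C(\xb))\neq\emptyset.
\]
In particular, there exists a point $w\in\ri(F_{J_-})\cap\Gamma_\C(\xb)$. Since $w\in\Gamma_\C(\xb)=D\G(\xb)[L_\Omega(\xb)]$, there is some $d\in L_\Omega(\xb)=D\G(\xb)^{-1}(\C)$ with $D\G(\xb)d=w\in\ri(F_{J_-})$. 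This $d$ lies in $D\G(\xb)^{-1}(\C)$ and satisfies $D\G(\xb)d\in\ri(F_{J_-})$, as required.
\end{proof}

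The argument is short because the real content is packaged into Lemma~\ref{lem:pataki} and into the definition of $F_{J_-}$ as a minimal face; the NLP discussion preceding Definition~\ref{def:crsc} already rehearsed exactly this reasoning, where the aggregated direction $d_{J_-}=\sum_{j\notin J_-}d_j$ was shown to map into $\ri(F_{J_-})$. I do not anticipate a serious obstacle here: the only subtlety is making sure one invokes the correct half of Lemma~\ref{lem:pataki} (the characterization $F=\Fmin(C)\iff\ri(F)\cap\ri(C)\neq\emptyset$) rather than trying to reconstruct an explicit $d$ by hand. If one wanted an explicit construction instead, the main work would be to exhibit, for each ``inactive'' facial direction, a linearized direction witnessing strict feasibility and then sum them — but this is precisely the content that Lemma~\ref{lem:pataki} abstracts away, so the clean route is to cite it directly.
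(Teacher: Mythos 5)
Your proposal is correct and follows essentially the same route as the paper's own proof: both invoke item 1 of Lemma~\ref{lem:pataki} to get $\ri(F_{J_-})\cap\ri(\Gamma_{\C}(\xb))\neq\emptyset$ and then pull the witness back through $D\G(\xb)$ using $\Gamma_{\C}(\xb)=D\G(\xb)[L_\Omega(\xb)]$. Your version is in fact slightly more explicit than the paper's (which leaves the pullback step implicit), and the step $\ri(\Gamma_{\C}(\xb))\subseteq\Gamma_{\C}(\xb)$ you use tacitly is valid since $\Gamma_{\C}(\xb)$ is convex.
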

\begin{proof}
Recall from the definition that $F_{J_-}$ is the smallest face of $\C$ that contains $\Gamma_{\C}(\xb)$, which means that $\ri(\Gamma_{\C}(\xb))\cap \ri(F_{J_-})\neq \emptyset$ (see Lemma~\ref{lem:pataki} item 1). Therefore, there exists some $d\in D\G(\xb)^{-1}(\C)$ such that $D\G(\xb)d\in \ri(F_{J_-})$. 
\end{proof}

Note that Lemma~\ref{lem:crsc-relaxrob} tells us that Slater's CQ holds for the constraint $D\G(\xb)d\in F_{J_-}$ regardless of its fulfilment for the original linearized constraint $D\G(\xb)d\in \C$. This fact, together with Lemma~\ref{lem:frlin}, suggests that $F_{J_-}$ has a special property that we will explore in Section~\ref{sec:fr}. For now, as a consequence of the previous two lemmas, we obtain:

\begin{lemma}\label{lem:riL}
Let $\xb\in \Omega$. Then, $\ri(L_{\Omega}(\xb))= D\G(\xb)^{-1}(\ri(F_{J_-}))$.
\end{lemma}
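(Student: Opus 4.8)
The plan is to prove the set equality $\ri(L_{\Omega}(\xb)) = D\G(\xb)^{-1}(\ri(F_{J_-}))$ by invoking the two immediately preceding lemmas and the Hiriart-Urruty relative-interior formula (Lemma~\ref{lem:preimage-ri}). First I would recall from Lemma~\ref{lem:frlin} that $L_{\Omega}(\xb) = D\G(\xb)^{-1}(F_{J_-})$, which lets me rewrite the left-hand side as $\ri(D\G(\xb)^{-1}(F_{J_-}))$. The goal is then to commute the relative interior operator with the preimage under the linear map $\mathcal{L} \doteq D\G(\xb)$, applied to the convex set $C \doteq F_{J_-}$.

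The mechanism for that commutation is exactly Lemma~\ref{lem:preimage-ri}, which states $\ri(\mathcal{L}^{-1}(C)) = \mathcal{L}^{-1}(\ri(C))$ \emph{provided} its hypothesis $\mathcal{L}^{-1}(\ri(C)) \neq \emptyset$ is verified. So the single thing that needs checking is that $D\G(\xb)^{-1}(\ri(F_{J_-}))$ is nonempty, i.e.\ that there exists some $d \in \R^n$ with $D\G(\xb)d \in \ri(F_{J_-})$. But this is precisely the content of Lemma~\ref{lem:crsc-relaxrob}, which guarantees a $d \in D\G(\xb)^{-1}(\C)$ with $D\G(\xb)d \in \ri(F_{J_-})$; in particular such a $d$ lies in $D\G(\xb)^{-1}(\ri(F_{J_-}))$, establishing nonemptiness.

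Having verified the hypothesis, I would apply Lemma~\ref{lem:preimage-ri} with $\mathcal{L} = D\G(\xb)$ and $C = F_{J_-}$ to obtain $\ri(D\G(\xb)^{-1}(F_{J_-})) = D\G(\xb)^{-1}(\ri(F_{J_-}))$, and then substitute the identity $L_{\Omega}(\xb) = D\G(\xb)^{-1}(F_{J_-})$ from Lemma~\ref{lem:frlin} on the left to conclude $\ri(L_{\Omega}(\xb)) = D\G(\xb)^{-1}(\ri(F_{J_-}))$, which is the claim.

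I do not anticipate a genuine obstacle here: the statement is essentially a bookkeeping combination of three already-established facts, and the proof is a two- or three-line assembly. The only point requiring a moment of care is making sure that the nonemptiness hypothesis of the Hiriart-Urruty lemma is stated with respect to the correct set ($\ri(F_{J_-})$, not $\ri(\C)$), which is exactly why Lemma~\ref{lem:crsc-relaxrob} was proved beforehand; the remark following that lemma (that Slater's condition holds for the face constraint irrespective of the original cone constraint) signals that this is the intended role. Thus the proof is a direct corollary of Lemmas~\ref{lem:frlin}, \ref{lem:preimage-ri}, and~\ref{lem:crsc-relaxrob}.
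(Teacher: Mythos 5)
Your proposal is correct and matches the paper's own proof exactly: nonemptiness of $D\G(\xb)^{-1}(\ri(F_{J_-}))$ via Lemma~\ref{lem:crsc-relaxrob}, the identity $L_{\Omega}(\xb)=D\G(\xb)^{-1}(F_{J_-})$ from Lemma~\ref{lem:frlin}, and then Lemma~\ref{lem:preimage-ri} with $\mathcal{L}=D\G(\xb)$ and $C=F_{J_-}$. No gaps; this is precisely the paper's three-line assembly.
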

\begin{proof}

Note that  $D\G(\xb)^{-1}(\ri(F_{J_-}))\neq \emptyset$ thanks to  Lemma~\ref{lem:crsc-relaxrob}.
Then, from Lemmas~\ref{lem:frlin} and~\ref{lem:preimage-ri}, 
we get 
\[
	\ri(L_{\Omega}(\xb))=\ri(D\G(\xb)^{-1}(F_{J_-}))= D\G(\xb)^{-1}(\ri(F_{J_-})),
\] which is the desired result.
\end{proof}

With these results at hand, it is simple to prove that CRSC is a constraint qualification:

\begin{theorem}\label{thm:crsccq}
Let $\xb\in \Omega$ be such that the dimension of $D\G(x)\adj[F_{J_-}^\perp]$ is constant for $x$ in a neighborhood of $\xb$. Then, $T_{\Omega}(\xb)=L_{\Omega}(\xb)$.
\end{theorem}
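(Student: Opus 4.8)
The inclusion $T_{\Omega}(\xb)\subseteq L_{\Omega}(\xb)$ always holds, so the plan is to establish only the reverse inclusion $L_{\Omega}(\xb)\subseteq T_{\Omega}(\xb)$. Since $L_{\Omega}(\xb)=D\G(\xb)^{-1}(\C)$ is a closed convex cone and $T_{\Omega}(\xb)$ is always closed, I would first reduce the problem to the relative interior: it is enough to prove $\ri(L_{\Omega}(\xb))\subseteq T_{\Omega}(\xb)$, and then pass to the closure using $\cl(\ri(L_{\Omega}(\xb)))=L_{\Omega}(\xb)$ together with the closedness of $T_\Omega(\xb)$. This reduction is exactly what lets me bring the face $F_{J_-}$ into play, because directions in the relative interior of $L_\Omega(\xb)$ are mapped into the relative interior of $F_{J_-}$.

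Concretely, fix $d\in \ri(L_{\Omega}(\xb))$. By Lemma~\ref{lem:riL} we have $D\G(\xb)d\in \ri(F_{J_-})\subseteq \Sp(F_{J_-})=(F_{J_-}^\perp)^\perp$. I would then apply the curve builder, Lemma~\ref{extra:curvebuilder}, with the subspace $\mathcal{W}\doteq F_{J_-}^\perp$: the theorem's hypothesis is precisely that $D\G(x)\adj[F_{J_-}^\perp]$ has constant dimension near $\xb$, and $D\G(\xb)d\in \mathcal{W}^\perp$ by the preceding line. This produces a twice differentiable curve $\xi$ with $\xi(0)=\xb$, $\xi'(0)=d$, and $\langle \G(\xi(t)),V\rangle=\langle \G(\xb),V\rangle$ for all $V\in F_{J_-}^\perp$ and all small $t$. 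Since $\G(\xb)=0$ in the reduced problem, this says $\G(\xi(t))\perp F_{J_-}^\perp$, i.e. $\G(\xi(t))\in \Sp(F_{J_-})$ for all $t$ close to $0$.

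The decisive step is to upgrade membership in the subspace $\Sp(F_{J_-})$ to membership in the cone $F_{J_-}$ itself. Writing the Taylor expansion $\G(\xi(t))=t\big(D\G(\xb)d+r(t)\big)$ with $r(t)\to 0$ as $t\to 0$, and noting that both $\G(\xi(t))$ and $D\G(\xb)d$ lie in $\Sp(F_{J_-})$, I get $r(t)\in \Sp(F_{J_-})$ for $t\neq 0$. Because $D\G(\xb)d\in \ri(F_{J_-})$ and $\ri(F_{J_-})$ is relatively open in $\Sp(F_{J_-})$, for all sufficiently small $t$ we have $D\G(\xb)d+r(t)\in F_{J_-}$; since $F_{J_-}$ is a cone, it follows that $\G(\xi(t))\in F_{J_-}\subseteq \C$ for all small $t>0$. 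By the local equivalence of $\G(\cdot)\in \C$ and $G(\cdot)\in \K$, this gives $\xi(t)\in \Omega$ for small $t>0$, and from $\xi(t)=\xb+td+o(t)$ we conclude $\dist(\xb+td,\Omega)=o(t)$, i.e. $d\in T_{\Omega}(\xb)$. This establishes $\ri(L_\Omega(\xb))\subseteq T_\Omega(\xb)$ and hence $L_\Omega(\xb)\subseteq T_\Omega(\xb)$.

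I expect this last upgrade to be the main obstacle, and it is where the choice of a relative-interior direction is genuinely needed: for a boundary direction $D\G(\xb)d\in F_{J_-}\setminus \ri(F_{J_-})$, the second-order term $r(t)$ could push the curve out of $\C$, so no such clean conclusion is available. This is precisely why the argument is organized around $\ri(L_\Omega(\xb))$ and why the closedness of $T_\Omega(\xb)$ is invoked at the end to recover the full cone. The remaining points are routine: $L_\Omega(\xb)$ is closed and convex (a linear preimage of the closed convex cone $\C$), so $\cl(\ri(L_\Omega(\xb)))=L_\Omega(\xb)$, and the differentiability of $t\mapsto \G(\xi(t))$ needed for the expansion is inherited from that of $\xi$ and $\G$ as guaranteed by Lemma~\ref{extra:curvebuilder}.
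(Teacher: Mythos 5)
Your proof is correct and follows essentially the same route as the paper's: reduce to $\ri(L_{\Omega}(\xb))$ using the closedness of $T_{\Omega}(\xb)$, invoke Lemma~\ref{lem:riL} to place $D\G(\xb)d$ in $\ri(F_{J_-})$, build the curve via Lemma~\ref{extra:curvebuilder} with $\mathcal{W}=F_{J_-}^\perp$, and upgrade membership in $\Sp(F_{J_-})$ to membership in $\ri(F_{J_-})$ through the Taylor expansion. If anything, your treatment of the upgrade step (observing $r(t)\in\Sp(F_{J_-})$, using relative openness of $\ri(F_{J_-})$ in $\Sp(F_{J_-})$, and correctly restricting to $t>0$) spells out details that the paper's one-line justification leaves implicit.
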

\begin{proof}
First, note that $T_{\Omega}(\xb)\subseteq L_{\Omega}(\xb)$.  Then, since $T_{\Omega}(\xb)$ is closed, it suffices to prove that $\ri(L_{\Omega}(\xb))\subseteq T_{\Omega}(\xb)$ to conclude that $L_{\Omega}(\xb)\subseteq T_{\Omega}(\xb)$, and consequently the desired equality. 
Let $d\in \ri(L_{\Omega}(\xb))$ and we have $D\G(\xb)d\in \ri(F_{J_-})$ due to Lemma~\ref{lem:riL}. By Lemma~\ref{extra:curvebuilder} with $\mathcal{W}= F_{J_-}^\perp$ there exists some $\varepsilon>0$ and a curve $\xi\colon (-\varepsilon,\varepsilon)\to \R^n$ such that $\xi(0)=\xb$, $\xi'(0)=d$, and 
\[
	\langle \G(\xi(t)),V\rangle=\langle \G(\xb),V\rangle=0
\]
for every $V\in F_{J_-}^\perp$ and every $t\in (-\varepsilon,\varepsilon)$. That is, $\G(\xi(t))\in \spn(F_{J_-})$ for every such $t$. Taking the Taylor expansion of $\G(\xi(t))$, we see that
\[
	\G(\xi(t))=tD\G(\xb)d+o(t),
\]
but since $D\G(\xb)d\in \ri(F_{J_-})$, we have $\G(\xi(t))\in \ri(F_{J_-})$ as well, for every $t\in(-\varepsilon,\varepsilon)$ shrinking $\varepsilon$ if necessary.
Hence, $d\in T_{\Omega}(\xb)$, but  since $d$ is arbitrary, it follows that $\ri(L_{\Omega}(\xb))\subseteq T_{\Omega}(\xb)$.
\end{proof}

Since the tangent cone of~\eqref{NCP} coincides with the tangent cone of~\eqref{redNCP} at any $\xb\in \F$ after applying its respective reduction mapping, the condition
\begin{center}
	$T_{\Omega}(\xb)=\mathcal{L}(\xb)$ and $H(\xb)$ is closed
\end{center}
characterizes a constraint qualification known as \textit{Abadie's CQ} for~\eqref{redNCP}.
Thus, Theorem~\ref{thm:crsccq} tells us that CRSC is a constraint qualification for the original problem~\eqref{NCP} because of the correspondence between Lagrange multipliers of~\eqref{NCP} and~\eqref{redNCP}, and since CRCQ implies CRSC, we obtain as a corollary that CRCQ is also a constraint qualification.

\begin{corollary}
Let $\xb\in \Omega$ be a local minimizer of~\eqref{NCP} that satisfies CRSC (or CRCQ). Then $\xb$ satisfies the KKT condition for~\eqref{NCP}.
\end{corollary}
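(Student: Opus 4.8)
The plan is to assemble the first-order optimality machinery recalled in Section~\ref{background} together with Theorem~\ref{thm:crsccq}, which already contains all the genuine work. Since $\xb$ is a local minimizer of~\eqref{NCP}, the elementary first-order condition recalled earlier gives $-\nabla f(\xb)\in T_{\Omega}(\xb)\pol$. The goal is to upgrade this membership to $-\nabla f(\xb)\in H(\xb)$, which is precisely the KKT condition~\eqref{eq:geokkt}; by the correspondence $\Lambda(\xb)=D\Xi(G(\xb))\adj[\mathcal{M}(\xb)]$ this then yields the existence of a Lagrange multiplier for~\eqref{NCP}.

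First I would observe that CRSC, as stated in Definition~\ref{def:crsc}, supplies exactly the hypothesis of Theorem~\ref{thm:crsccq}: namely, the dimension of $D\G(x)\adj[F_{J_-}^\perp]$ remains constant for $x$ in a neighborhood of $\xb$. Applying that theorem directly yields $T_{\Omega}(\xb)=L_{\Omega}(\xb)$, and therefore, taking polars, $T_{\Omega}(\xb)\pol=L_{\Omega}(\xb)\pol$. This is the Abadie-type equality that is the heart of the argument.

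Next I would invoke the other half of CRSC, namely that $H(\xb)$ is closed. Combining this with Guignard's characterization recalled around~\eqref{def:H} -- that the polar of the linearized cone equals $\overline{H(\xb)}$ -- closedness gives $L_{\Omega}(\xb)\pol=H(\xb)$. Chaining the memberships then produces $-\nabla f(\xb)\in T_{\Omega}(\xb)\pol=L_{\Omega}(\xb)\pol=H(\xb)$, which is~\eqref{eq:geokkt}, so KKT holds. Finally, for the CRCQ case I would simply recall the observation made immediately after Definition~\ref{def:crsc} that CRCQ implies CRSC, so the same conclusion applies verbatim.

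The only point requiring care -- a matter of bookkeeping rather than a real obstacle -- is to check that every object is interpreted consistently across~\eqref{NCP} and its reduced version~\eqref{redNCP}. The tangent cone $T_{\Omega}(\xb)$ coincides for both (the feasible sets agree in a neighborhood of $\xb$), the linearized cone $L_{\Omega}(\xb)$ coincides by~\eqref{eq:Lred-L}, and $H(\xb)$ coincides by~\eqref{eq:hred-h}; hence Theorem~\ref{thm:crsccq}, phrased in the reduced language through $\G$ and $\C$, transfers to~\eqref{NCP} without loss. Granting Theorem~\ref{thm:crsccq}, no genuinely new estimate is needed and the corollary follows by assembly.
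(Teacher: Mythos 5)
Your proposal is correct and takes essentially the same route as the paper: the paper also obtains this corollary directly from Theorem~\ref{thm:crsccq} (giving $T_{\Omega}(\xb)=L_{\Omega}(\xb)$) combined with the closedness of $H(\xb)$ built into Definition~\ref{def:crsc}, which together yield Abadie's/Guignard's CQ and hence $-\nabla f(\xb)\in H(\xb)$, with the transfer between~\eqref{NCP} and~\eqref{redNCP} handled exactly as you do via~\eqref{eq:Lred-L},~\eqref{eq:hred-h} and the multiplier correspondence $\Lambda(\xb)=D\Xi(G(\xb))\adj[\mathcal{M}(\xb)]$. Your treatment of the CRCQ case, by the observation that CRCQ implies CRSC, also matches the paper's argument verbatim.
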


Furthermore, Robinson's CQ also implies CRSC.

\begin{proposition}\label{prop:robthencrsc}
Let $\xb\in \Omega$ satisfy Robinson's CQ. Then, $\xb$ satisfies CRSC.
\end{proposition}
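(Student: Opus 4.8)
The plan is to read off the two ingredients of CRSC from Definition~\ref{def:crsc} --- closedness of $H(\xb)$ and local constancy of $\dim D\G(x)\adj[F_{J_-}^\perp]$ --- and then to show that Robinson's condition forces the minimal face $F_{J_-}$ to coincide with the whole reduced cone $\C$, which will make the constant-rank requirement trivial.

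First I would pass to the reduced problem~\eqref{redNCP}. Since the reduction map has surjective derivative and $D\Xi(G(\xb))[T_\K(G(\xb))]=\C$, Robinson's CQ~\eqref{def:robinson} for~\eqref{NCP} is equivalent to its reduced counterpart $0\in\int(\Im D\G(\xb)-\C)$ (using $\G(\xb)=0$); see~\cite[Section 3.4.4]{bonnans-shapiro}. As $\Im D\G(\xb)-\C$ is a convex cone, having $0$ in its interior means it equals $\mathbb{F}$, i.e.\ $\Im D\G(\xb)-\C=\mathbb{F}$. Because $\C$ has nonempty interior in $\mathbb{F}$ by our standing assumption, picking any $w\in\int(\C)$ and writing $w=D\G(\xb)d-c$ with $c\in\C$ yields $D\G(\xb)d=w+c\in\int(\C)$; hence there is a Slater-type direction $d$ with $D\G(\xb)d\in\int(\C)=\ri(\C)$.

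The crux is then the identification $F_{J_-}=\C$. Such a $d$ satisfies $D\G(\xb)d\in\C$, so $d\in L_{\Omega}(\xb)=D\G(\xb)^{-1}(\C)$ and therefore $D\G(\xb)d\in\Gamma_{\C}(\xb)\cap\ri(\C)$. Since $\Gamma_{\C}(\xb)\subseteq\C$ and it meets $\ri(\C)$, Lemma~\ref{lem:pataki}(2) applied with $C=\Gamma_{\C}(\xb)$ and $F=\C$ gives $F_{J_-}=\Fmin(\Gamma_{\C}(\xb))=\C$. Consequently $F_{J_-}^\perp=(\Sp(\C))^\perp=\{0\}$ --- again because $\C$ has nonempty interior --- so $D\G(x)\adj[F_{J_-}^\perp]=\{0\}$ for every $x$, a subspace of constant dimension $0$. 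The constant-rank part of CRSC therefore holds vacuously.

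It remains to check that $H(\xb)=D\G(\xb)\adj[\C\pol]$ is closed. Taking polars in $\Im D\G(\xb)-\C=\mathbb{F}$ and using $(\Im D\G(\xb))\pol=\Ker D\G(\xb)\adj$ gives $\Ker D\G(\xb)\adj\cap\C\pol=\{0\}$; as $\C\pol$ is a closed convex cone meeting $\Ker D\G(\xb)\adj$ only at the origin, its image under $D\G(\xb)\adj$ is closed by the standard closedness criterion for linear images of a closed convex cone, which is also the familiar consequence of Robinson's CQ recorded in~\cite{bonnans-shapiro}. With both requirements verified, CRSC holds at $\xb$. The main obstacle I anticipate is the reduction transfer and the extraction of the Slater direction $D\G(\xb)d\in\ri(\C)$; once that is in place, the identification $F_{J_-}=\C$ and the triviality $F_{J_-}^\perp=\{0\}$ finish the argument with essentially no computation.
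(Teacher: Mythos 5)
Your proof is correct and takes essentially the same route as the paper's: extract a Slater-type direction $d$ with $D\G(\xb)d\in \int(\C)$ from Robinson's CQ, identify $F_{J_-}=\C$ via Lemma~\ref{lem:pataki}, and observe that $F_{J_-}^\perp=\{0\}$ makes the constant-rank requirement hold trivially. If anything, your write-up is more complete than the paper's: you explicitly verify the closedness of $H(\xb)$ (via $\Ker D\G(\xb)\adj\cap\C\pol=\{0\}$ and the standard closedness criterion for linear images of closed convex cones), a requirement of Definition~\ref{def:crsc} that the paper's one-line proof leaves implicit as a well-known consequence of Robinson's CQ.
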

\begin{proof}
If Robinson's CQ holds at $\xb$, then there exists some $d\in \R^n$ such that $D\G(\xb)d\in \int(\C)$, which implies that
\[
	D\G(\xb)[D\G(\xb)^{-1}(\int(\C))]\cap \int(\C)\neq\emptyset,
\]
so $\C=F_{J_-}$ and $D\G(x)\adj[F_{J_-}^\perp]=\{0\}$, which has constant rank (equal to zero) for every $x\in\R^n$. The proof of the closedness of $H(\xb)=DG(\xb)^*[T_{\K}(G(\xb))\pol]$ follows directly from two well-known statements:
\begin{itemize}
\item \cite[Corollary 2.98]{bonnans-shapiro} Robinson's CQ implies that \[\Ker (DG(\xb)^*) \cap T_{\K}(G(\xb))\pol = \{0\};\]
\item \cite[Theorem 9.1]{rockafellar1970convex} Let $\mathbb{X}$ and $\mathbb{Y}$ be finite-dimensional linear spaces, $\emptyset\neq C\in \mathbb{X}$ be a closed convex cone, and $A\colon \mathbb{X}\to \mathbb{Y}$ be a linear mapping. If $C\cap \Ker A = \{0\}$, then $A[C]$ is closed.
\end{itemize} 
This concludes the proof.
\end{proof}

In particular,  To further clarify the relationship between CRCQ and CRSC, we shall present a small lemma followed by an example that shows that CRSC does not imply Robinson's CQ.

\begin{lemma}\label{lem:hclosed}
Let $\xb\in \Omega$ and let $\mathcal{F}(\xb)$ be the set of all objective functions that are continuous in a neighborhood of $\xb$ and have a local minimum constrained by $\Omega$ at $\xb$. If the KKT conditions hold at $\xb$ for every $f\in\mathcal{F}(\xb)$, then $H(\xb)$ is closed.
\end{lemma}
\begin{proof}
Let $D\mathcal{F}(\xb)\doteq \left\{-\nabla f(\xb)\in \R^n\colon f\in \mathcal{F}(\xb) \right\}$ and recall~\cite[Corollary 3.4]{gould1975optimality} which states that \[
D\mathcal{F}(\xb)=\cl(H(\xb))
\] where $\cl(H(\xb))$ stands for the closure of $H(\xb)$. By hypothesis, we have that $D\mathcal{F}(\xb)\subseteq H(\xb)$, thus $H(\xb)$ is closed.
\end{proof}

\begin{example}
Consider an NSDP problem constrained by:
\[
	G(x)\doteq \begin{bmatrix}
	x_1 & x_2-x_1^2 & x_1 \\
	x_2-x_1^2 & x_2 & x_2-x_1^2\\
	x_1 & x_2-x_1^2 & x_1\\
	\end{bmatrix}\in\S^3_+.\\
\]
We will perform the analysis at the feasible point $\xb=(0,0)\in \R^2$. Note that it admits the trivial reduction via the identity mapping at $\xb$, that is, $\mathcal{G}=G$ and $\mathcal{C}=\S^{3}_+$. The linearized cone can be computed as follows:
\[
	L_{\Omega}(\xb) = \{(d_1,d_2)\in \R^2\colon 0\leq d_2\leq d_1\}.
\]
Now let $d\doteq (d_1,d_2)$ and \[
	U\doteq \begin{bmatrix}
	\frac{1}{\sqrt{2}} & 0 & \frac{-1}{\sqrt{2}}\\
	0 & 1 & 0\\
	\frac{1}{\sqrt{2}} & 0 & \frac{1}{\sqrt{2}}\\
	\end{bmatrix}
	\
	\textnormal{and notice that} 
	\
	U^\top D\G(\xb)d U=\begin{bmatrix}
	2d_1 & \sqrt{2}d_2 & 0\\
	\sqrt{2}d_2 & d_2 & 0\\
	0 & 0 & 0
	\end{bmatrix}
\]
then
\[
	F_{J_-} = F_{\min}(D\G(\xb)[L_{\Omega}(\xb)]) = \left\{
	U	
	\begin{bmatrix}
	A & 0\\
	0 & 0
	\end{bmatrix}	
	U^\top\colon A\in \S^2_+	
	\right\}
\]
and, consequently,
\[
	F_{J_-}^\perp = \left\{
	U	
	\begin{bmatrix}
	0 & 0 & m_{13}\\
	0 & 0 & m_{23}\\
	m_{13} & m_{23} & m_{23}\\
	\end{bmatrix}	
	U^\top\colon m_{i,j}\in \R	
	\right\}.
\]
Now, because for every $M\doteq [m_{ij}]_{i,j\in\{1,3\}}\in \R^{3\times 3}$ we have
\[
	D\G(x)^*[UMU^\top]=\left\{\begin{bmatrix}
	2m_{11}-2\sqrt{2}x_1(m_{12}+m_{21})\\
	m_{22}+\sqrt{2}(m_{12}+m_{21})
	\end{bmatrix}\in \R^2\colon m_{ij}\in\R\right\}
\]
it follows that $D\G(x)^*[F_{J_-}^\perp]=\{0\}$ for every $x\in \R^2$, meaning its dimension remains constant for every $x$. To prove that $H(\xb)$ is closed, let $f\in \mathcal{F}(\xb)$, where $\mathcal{F}(\xb)$ is defined as in Lemma~\ref{lem:hclosed}, and suppose that $\nabla f(\xb)\doteq(\alpha, \beta)\in \R^2$. Notice that
\[
	G(x)\succeq 0 \quad \text{if, and only if,} \quad U^\T G(x)U = \begin{bmatrix}
		2x_1 & 2(x_2 - x_1^2) & 0 \\
    		2(x_2 - x_1^2) & x_2 & 0 \\
    		0 & 0 & 0
	\end{bmatrix}\succeq 0
\]
so the feasible set can be described as follows:
\[
\Omega= \left\{x\in \R^2 \colon 
\begin{array}{c}
	x_1\geq 0, \ x_2 \geq 0 \\
	x_1 x_2 - 2(x_2 - x_1^2)^2 \geq 0)
\end{array}\right\}.
\]
Observe that for every $t>0$ the curve $\phi\colon t\mapsto (t,t^2)$ is feasible. This implies that $\alpha \geq 0$, because otherwise, if $\alpha<0$, we would have $f(\phi(t))\approx f(\xb) + \alpha t + \beta t^2<f(\xb)$ for $t$ small enough, and $\xb$ could not be a local minimizer for $f$. Now, we have two cases to analyse:
\begin{itemize}
\item If $\beta \geq 0$, then take 
\[
\Lambda \doteq \begin{bmatrix}
 				\alpha  & 0 & 0 \\
 				 0& \beta & 0 \\
 				0 & 0 & 0
 			\end{bmatrix}\in \S^m_-
\] which satisfies $\nabla f(\xb) + DG(\xb)^*[\Lambda]=0$;
\item If $\beta < 0$, observe that the curve $\psi\colon t\mapsto (t,t)$ is also feasible for $t\in[0,2)$, and since $\xb$ is a local minimizer of $f$ we must have $\alpha>-\beta>0$, so let $\delta \doteq 1-\frac{\beta}{\alpha}$, which satisfies $|\delta| < 2$. This implies that
\[
\Lambda \doteq \alpha\begin{bmatrix}
	-\frac{3}{2} & \frac{\delta}{4} & 1 \\
	\frac{\delta}{4} & -1 & \frac{\delta}{4} \\
	1 & \frac{\delta}{4} & -\frac{3}{2}
\end{bmatrix}\in \S^m_-
\]
and for this choice of $\Lambda$ we have $\nabla f(\xb) + DG(\xb)^*[\Lambda]=0$ also.
\end{itemize}
Therefore, $\xb$ satisfies the KKT conditions for all $f\in \mathcal{F}(\xb)$ and, by Lemma~\ref{lem:hclosed}, it follows that $H(\xb)$ is closed and thus $\xb$ satisfies CRSC. However, notice that $D\G(\xb)d$ always has one zero eigenvalue for every $d\in \R^2$, so Robinson's CQ does not hold at $\xb$.
\end{example}

There are also counterexamples in NLP presented by Janin~\cite[Examples 2.1 and 2.2]{crcq} that, just as the previous example, illustrate that CRCQ is independent of Robinson's CQ. 
%

\section{Second-order analysis}

In a previous work by Andreani et al. \cite{CRCQfaces} for NSDP and NSOCP, the authors used CRCQ to obtain a strong second-order optimality condition for~\eqref{NCP} depending on any single given Lagrange multiplier. This result is, in particular, stronger than the classical second-order condition of Bonnans, Cominetti, and Shapiro~\cite{bonn-comi-shap}. However, since Robinson's CQ is not enough for obtaining a similar result \cite{conjnino}, the same holds for CRSC. We can, though, define a new constraint qualification for~\eqref{NCP} that is in-between CRCQ and CRSC, which can be used to obtain the strong second-order condition of~\cite{CRCQfaces} by directly applying CRCQ to the constraint $\G(x)\in F_{J_-}$ (see also Section~\ref{sec:fr}). That is:

\begin{definition}[Strong-CRSC]\label{def:scrsc}
Let $\xb\in \Omega$ and define $F_{J_-}\doteq \Fmin(D\G(\xb)[L_\Omega(\xb)])\faceq \C$. We say that \emph{Strong-CRSC} holds at $\xb$ when $H(\xb)$ is closed and there exists a neighborhood $\mathcal{V}$ of $\xb$ such that for each $F\faceq F_{J_-}$, the dimension of $D\G(x)^*[F^\perp]$ remains constant for every $x\in \mathcal{V}$.
\end{definition}

Observe that Strong-CRSC implies CRSC by taking $F=F_{J_-}$, and note also that CRCQ implies Strong-CRSC since all faces of $F_{J_-}$ are, in particular, faces of $\C$. 
On the other hand, Example~\ref{ex:crscnotcrcq} proves that Strong-CRSC is still strictly weaker than CRCQ since in that example $F_{J_-}=\{0\}$ and thus Strong-CRSC coincides with CRSC there. Finally, it is worth mentioning that Strong-CRSC is new even in NLP, and any second-order result involving it is also an improvement of the existing NLP results (which are based on CRCQ \cite{rcpld}). 

To proceed, let us prove the strong second-order condition under Strong-CRSC:

\begin{theorem}\label{ssoc-scrsc}
Let $\xb\in \F$ be a local minimizer of \eqref{NCP} that satisfies Strong-CRSC. Then, for every $\bar{Y}\in \Lambda(\xb)$ and every $d\in C(\xb)\doteq L_{\Omega}(\xb)\cap \{\nabla f(\xb)\}^\perp$, the following inequality is satisfied:
\begin{equation}\label{eq:ssoc}
	d^T \nabla^2 f(\xb)d+\left\langle D^2G(\xb)[d,d], \bar{Y} \right\rangle \geqslant \sigma(\xb,d,\bar{Y}),
\end{equation}
where
\[
	\sigma(\xb,d,\bar{Y})\doteq \sup\{\langle W, \bar{Y}\rangle\colon W\in T^2_{\K}(G(\xb),DG(\xb)d)\}
\]
and $T^2_{\K}(G(\xb),DG(\xb)d)$ is the second-order tangent set to $\K$ at $G(\xb)$ along $DG(\xb)d$.
\end{theorem}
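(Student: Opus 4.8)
The plan is to pass to the reduced problem \eqref{redNCP} and to prove there the inequality
\[
d^T\nabla^2 f(\xb)d + \langle D^2\G(\xb)[d,d], \bar\mu\rangle \ge \sigma_{\C}(\xb,d,\bar\mu), \quad \sigma_{\C}(\xb,d,\bar\mu):=\sup\{\langle W,\bar\mu\rangle : W\in T^2_{\C}(\G(\xb),D\G(\xb)d)\},
\]
for every $\bar\mu\in\mathcal{M}(\xb)$ and every $d\in C(\xb)$. By the correspondences recalled in Section~\ref{background} ($\Lambda(\xb)=D\Xi(G(\xb))^*[\mathcal{M}(\xb)]$ and the chain rule for $\G=\Xi\circ G$), together with the standard reducibility identities relating $T^2_{\K}$ and $T^2_{\C}$, under which the curvature term $\langle D^2\Xi(G(\xb))[DG(\xb)d,DG(\xb)d],\bar\mu\rangle$ enters the reduced Lagrangian Hessian and the reduced sigma term identically and hence cancels, the reduced inequality transports back to the claimed statement for $G$, $\K$, and $\bar Y=D\Xi(G(\xb))^*[\bar\mu]$. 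Thus it suffices to argue entirely with $\G$ and $\C$.

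Fix $\bar\mu\in\mathcal{M}(\xb)$ and $d\in C(\xb)$, write $h:=D\G(\xb)d$, and let $F_d:=\Fmin(\{h\})$. By Lemma~\ref{lem:frlin} we have $h\in F_{J_-}$, hence $F_d\faceq F_{J_-}\faceq\C$ and $h\in\ri(F_d)$. Criticality gives $\langle\bar\mu,h\rangle=\langle D\G(\xb)^*[\bar\mu],d\rangle=\langle-\nabla f(\xb),d\rangle=0$; since in addition $\bar\mu\in\C\pol\subseteq F_d\pol$ and $h\in\ri(F_d)$, a standard relative-interior argument (a linear form that is nonpositive on $F_d$ and vanishes at a relative-interior point vanishes on $\spn(F_d)$) shows that $\bar\mu\in F_d^{\perp}$. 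This is the structural observation that localizes the whole computation to the single subface $F_d$, along which Strong-CRSC guarantees that $\dim D\G(x)^*[F_d^{\perp}]$ is locally constant precisely because $F_d\faceq F_{J_-}$.

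For each $W\in T^2_{\C}(\G(\xb),h)$ the plan is to produce a twice-differentiable curve $\xi$ with $\xi(0)=\xb$, $\xi'(0)=d$, with $\G(\xi(t))\in\C$ for all small $t$, and with $D\G(\xb)\xi''(0)+D^2\G(\xb)[d,d]=W$. Feeding $f(\xi(t))\ge f(\xb)$ into a second-order Taylor expansion and using $\langle\nabla f(\xb),d\rangle=0$ together with $-\nabla f(\xb)=D\G(\xb)^*[\bar\mu]$ then yields $\langle W,\bar\mu\rangle\le d^T\nabla^2 f(\xb)d+\langle D^2\G(\xb)[d,d],\bar\mu\rangle$, and taking the supremum over $W$ gives the reduced inequality. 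The first-order, tangential part of this curve (ensuring $\xi'(0)=d$ while confining the constraint to $\spn(F_d)$) is delivered by Lemma~\ref{extra:curvebuilder} applied with $\mathcal{W}=F_d^{\perp}$, whose constant-dimension hypothesis is exactly the Strong-CRSC requirement along $F_d$; feasibility of this piece for small $t$ follows from $h\in\ri(F_d)$.

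I expect the main obstacle to be the genuinely second-order behaviour of $\xi$. Since $\bar\mu\in F_d^{\perp}$, the pairing $\langle W,\bar\mu\rangle$ only detects the component of $W$ transverse to $\spn(F_d)$, so to recover the full support function $\sigma_{\C}$ — rather than the trivial bound $0$ produced by curves confined to $\spn(F_d)$ — the curve must be made to bend out of $\spn(F_d)$ into $\C$ at order $t^2$, exactly along the directions seen by $\bar\mu$. Realizing an arbitrary admissible $W$ in this way is the delicate point: it requires combining the curve builder with an explicit second-order correction extracted from the local reduction of $\C$ at $h\in\ri(F_d)$ (which furnishes a usable description of $T^2_{\C}(\G(\xb),h)$), while keeping the correction simultaneously feasible and $\mathcal{C}^2$ and controlling it through the constant rank of $D\G(x)^*[F_d^{\perp}]$. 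This is the step that genuinely needs Strong-CRSC — constant rank along every subface $F_d$ of $F_{J_-}$, not merely along $F_{J_-}$ itself as in CRSC — since $F_d$ varies with the critical direction $d$. Once the reduced inequality is in hand, the transport back to $\K$, $G$, and $\bar Y$ through the reducibility identities of Section~\ref{background} is routine by comparison.
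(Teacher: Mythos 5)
Your skeleton coincides with the paper's proof up to the decisive step: the reduction to $\G$, $\C$; the subface $F_d=\Fmin(\{D\G(\xb)d\})\faceq F_{J_-}$; the observation $\bar\mu\in F_d^\perp$ via the face argument; and the curve from Lemma~\ref{extra:curvebuilder} with $\mathcal{W}=F_d^\perp$, feasible for small $t\ge0$ because $D\G(\xb)d\in\ri(F_d)$ --- all of this is exactly what the paper does. The gap is where you declare the ``main obstacle'': you claim that to recover $\sigma_\C$ one must, for each $W\in T^2_\C(\G(\xb),h)$, build a feasible curve with $D\G(\xb)\xi''(0)+D^2\G(\xb)[d,d]=W$, bending out of $\spn(F_d)$ at order $t^2$, and you leave this unresolved. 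As proposed, this step would not go through: realizing arbitrary second-order tangent vectors by feasible curves is what metric regularity (Robinson's CQ) provides, whereas Strong-CRSC gives no such surjectivity, and the curve builder gives you no control at all over $\xi''(0)$.

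The missing observation is that this step is unnecessary, because the reduced sigma term is identically zero. Since $\G(\xb)=0$ is the \emph{vertex} of $\C$, homogeneity gives $\dist(th+\tfrac{1}{2}t^2W,\C)=t\,\dist(h+\tfrac{t}{2}W,\C)$, whence $T^2_\C(0,h)=T_\C(h)=\cl(\C-\R_+h)$; and since $\bar\mu\in\C\pol$ with $\langle\bar\mu,h\rangle=0$ (your own criticality computation), every $W$ in this set satisfies $\langle W,\bar\mu\rangle\le 0$, while $W=0$ is admissible, so $\sigma_\C(\xb,d,\bar\mu)=0$. Consequently the ``trivial bound $0$'' produced by your curve confined to $\spn(F_d)$ \emph{is} the full reduced inequality: with $R(t)\doteq\langle\G(\xi(t)),\bar\mu\rangle\equiv 0$ (valid because $\bar\mu\in F_d^\perp$), the identity $R''(0)=0$ pins down $\nabla f(\xb)^T\xi''(0)=\langle D^2\G(\xb)[d,d],\bar\mu\rangle$, and $\phi''(0)\geq 0$ finishes the reduced proof. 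The paper reaches the stated inequality for $\K$ by quoting the reduction identity $\sigma(\xb,d,\bar Y)=-\langle D^2\Xi(G(\xb))[DG(\xb)d,DG(\xb)d],\bar{\mathcal{Y}}\rangle$ from Bonnans--Shapiro (Equations 3.274--3.275), which is precisely the statement $\sigma_\C=0$ transported back through $\Xi$; your transport sketch via the chain rule for $D^2\G$ is consistent with this and is otherwise fine. So: right architecture, but the proof is incomplete at exactly the point you flagged, and the repair is a computation of $T^2_\C$ at the vertex rather than any curve construction.
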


\begin{proof}
Let $\bar{Y}\in \Lambda(\xb)$ and $d\in C(\xb)$ be arbitrary; then $D\G(\xb)d\in F_{J_-}$ thanks to Lemma~\ref{lem:frlin}. Moreover, let $F$ be the smallest face of $F_{J_-}$ that contains $D\G(\xb)d$ in its relative interior\footnote{\label{footnote:2}Noteworthy, if $d\in \ri(\mathcal{L}(\xb))$, this face is $F_{J_-}$ itself.}.
Then, by Strong-CRSC, similarly to the proof of Theorem~\ref{thm:crsccq}, there exists some $\varepsilon>0$ and a twice continuously differentiable curve $\xi\colon (-\varepsilon,\varepsilon)\to\R^n$ such that $\xi(0)=\xb$, $\xi'(0)=d$, and 
\[
	\mathcal{G}(\xi(t))\in \textnormal{ri}(F)
\]
for all $t\in[0,\varepsilon)$. Since $\xb$ is a local minimizer of~\eqref{NCP} and $\xi(t)$ is, in particular, feasible for every small $t$, then $t=0$ is a local minimizer of the function $\phi(t)\doteq f(\xi(t))$ subject to the constraint $t\geq 0$. Consequently,
\begin{equation}\label{sdp_phi}
\phi^{\prime\prime}(0)=d^T \nabla^2 f(\xb)d+\nabla f(\xb)^T\xi^{\prime\prime}(0)\geqslant 0.
\end{equation}	

The rest of the proof consists of computing the term $\nabla f(\xb)^T\xi^{\prime\prime}(0)$. To do so, let $\bar{\mathcal{Y}}$ be such that $\bar{Y}=D\Xi(G(\xb))^*[\bar{\mathcal{Y}}]$, which is uniquely determined since $D\Xi(G(\xb))^*$ is injective. By the KKT conditions, we have that
\[
	\langle d, \nabla f(\xb) \rangle=-\left\langle d, DG(\xb)\adj[\bar{Y}]\right\rangle=-\left\langle DG(\xb)d, \bar{Y}\right\rangle=0.
\]
Therefore,
\jo{\begin{eqnarray*}
	\langle DG(\xb)d,\bar{Y}\rangle=\langle DG(\xb)d,D\Xi(G(\xb))^*[\bar{\mathcal{Y}}]\rangle=\langle D\Xi(G(\xb))DG(\xb)d,\bar{\mathcal{Y}}\rangle\\
	=\langle D\G(\xb)d,\bar{\mathcal{Y}}\rangle=0,
\end{eqnarray*}}
\si{\[
	\langle DG(\xb)d,\bar{Y}\rangle=\langle DG(\xb)d,D\Xi(G(\xb))^*[\bar{\mathcal{Y}}]\rangle=\langle D\Xi(G(\xb))DG(\xb)d,\bar{\mathcal{Y}}\rangle=\langle D\G(\xb)d,\bar{\mathcal{Y}}\rangle=0,
\]}
so $D\G(\xb)d\in F\cap \{\bar{\mathcal{Y}}\}^\perp$. Note that $F\cap \{\bar{\mathcal{Y}}\}^\perp$ is a face of $F$ because $\bar{\mathcal{Y}}\in \C\pol$ and, consequently, $ \{\bar{\mathcal{Y}}\}^\perp$ is a supporting hyperplane to $F$. Then, $F\cap \{\bar{\mathcal{Y}}\}^\perp$ is also a face of $\C$ because $F$ is a face of $\C$. On the other hand, $F$ is by construction the minimal face of $\C$ containing $D\G(\xb)d$, so we must have $F=F\cap \{\bar{\mathcal{Y}}\}^\perp$, hence $F\subseteq \{\bar{\mathcal{Y}}\}^\perp$ and, consequently, $\bar{\mathcal{Y}}\in F^\perp$.
Then, consider the function
\[
	R(t)\doteq \left\langle\mathcal{G}(\xi(t)), \bar{\mathcal{Y}} \right\rangle
\]
and our previous reasoning implies that $R(t)=0$ for every small $t\geq 0$. Differentiating $R(t)$, we obtain:
\[
	\begin{aligned}
		R'(t) & = \left\langle D\Xi(G(\xi(t)))DG(\xi(t))\xi'(t), \bar{\mathcal{Y}} \right\rangle.\\
	\end{aligned}
\]
Differentiating it once more, at $t=0$, we obtain:
\jo{\[
	\begin{aligned}
		R^{\prime\prime}(0) & = \frac{d}{dt}\left\langle D\Xi(G(\xi(t)))DG(\xi(t))\xi'(t), \bar{\mathcal{Y}} \right\rangle\left|_{t=0}\right.\\
		& = \left\langle D^2\Xi(G(\xb))[DG(\xb)d,DG(\xb)d] + D\Xi(G(\xb))D^2G(\xb)[d,d], \bar{\mathcal{Y}} \right\rangle +\\
		 & \left\langle D\Xi(G(\xb))DG(\xb)\xi^{\prime\prime}(0), \bar{\mathcal{Y}} \right\rangle\\
		& = 0.
	\end{aligned}
\]}
\si{\[
	\begin{aligned}
		R^{\prime\prime}(0) & = \frac{d}{dt}\left\langle D\Xi(G(\xi(t)))DG(\xi(t))\xi'(t), \bar{\mathcal{Y}} \right\rangle\left|_{t=0}\right.\\
		& = \left\langle D^2\Xi(G(\xb))[DG(\xb)d,DG(\xb)d] + D\Xi(G(\xb))D^2G(\xb)[d,d]+ D\Xi(G(\xb))DG(\xb)\xi^{\prime\prime}(0), \bar{\mathcal{Y}} \right\rangle\\
		& = 0.
	\end{aligned}
\]}
Using the fact
\[
	\left\langle D\Xi(G(\xb))DG(\xb)\xi^{\prime\prime}(0), \bar{\mathcal{Y}} \right\rangle=\left\langle \xi^{\prime\prime}(0), DG(\xb)^*[\bar{Y}] \right\rangle=-\nabla f(\xb)^T\xi^{\prime\prime}(0),
\]
it follows that
\begin{equation}\label{eq:invariance}
	\nabla f(\xb)^T\xi^{\prime\prime}(0)=\left\langle D^2\Xi(G(\xb))[DG(\xb)d,DG(\xb)d],\bar{\mathcal{Y}} \right\rangle + \left\langle D^2G(\xb)[d,d], \bar{Y} \right\rangle.
\end{equation}
Moreover, following Bonnans and Shapiro~\cite[Equations 3.274 and 3.275]{bonnans-shapiro}, we see that
\[
	\sigma(\xb,d,\bar{Y})=-\left\langle D^2\Xi(G(\xb))[DG(\xb)d,DG(\xb)d],\bar{\mathcal{Y}} \right\rangle.
\]
Thus, substituting the above expressions in~\eqref{sdp_phi}, we conclude that
\begin{equation*}
	d^T \nabla^2 f(\xb)d+\left\langle D^2G(\xb)[d,d], \bar{Y} \right\rangle \geqslant \sigma(\xb,d,\bar{Y}).
\end{equation*}
Since $d$ and $\bar{Y}$ were chosen arbitrarily, this proof is complete.
\end{proof}

\begin{remark}Notice that the closedness of $H(\xb)$ (subsumed by the definition of Strong-CRSC) is actually not needed in the proof of Theorem \ref{ssoc-scrsc}.\end{remark}

Using only CRSC, we can obtain a weaker result with an analogous proof (see Footnote~\ref{footnote:2}). 
 What follows is a formal statement of such result:

\begin{corollary}
Let $\xb\in \F$ satisfy CRSC. Then,~\eqref{eq:ssoc} holds for every $\bar{Y}\in \Lambda(\xb)$ and every $d\in \ri(L_{\Omega}(\xb))\cap \{\nabla f(\xb)\}^\perp\subseteq C(\xb)$.
\end{corollary}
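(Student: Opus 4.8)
The plan is to derive this as a restriction of Theorem~\ref{ssoc-scrsc}, observing that confining $d$ to $\ri(L_{\Omega}(\xb))$ is exactly what permits the argument of that theorem to run with CRSC in place of Strong-CRSC. First I would dispose of the stated inclusion, which is immediate: since $\ri(L_{\Omega}(\xb))\subseteq L_{\Omega}(\xb)$, intersecting with $\{\nabla f(\xb)\}^\perp$ gives $\ri(L_{\Omega}(\xb))\cap\{\nabla f(\xb)\}^\perp\subseteq L_{\Omega}(\xb)\cap\{\nabla f(\xb)\}^\perp=C(\xb)$, so each admissible $d$ is indeed a critical direction and $\sigma(\xb,d,\bar{Y})$ is well defined.

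The heart of the matter is the content of Footnote~\ref{footnote:2}: for $d\in\ri(L_{\Omega}(\xb))$, Lemma~\ref{lem:riL} yields $D\G(\xb)d\in\ri(F_{J_-})$, so by Lemma~\ref{lem:pataki} the minimal face of $F_{J_-}$ that contains $D\G(\xb)d$ in its relative interior is $F_{J_-}$ itself. In the proof of Theorem~\ref{ssoc-scrsc} it is precisely on this face $F$ that the constant-rank hypothesis is invoked, through Lemma~\ref{extra:curvebuilder} applied with $\mathcal{W}=F^\perp$. Here $F=F_{J_-}$, so the constant-rank requirement reduces to the dimension of $D\G(x)^*[F_{J_-}^\perp]$ being locally constant, which is exactly what CRSC (Definition~\ref{def:crsc}) asserts. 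Hence the curve $\xi$ with $\xi(0)=\xb$, $\xi'(0)=d$, and $\G(\xi(t))\in\ri(F_{J_-})$ for small $t\geq 0$ can still be constructed, now using only CRSC. The step producing $\bar{\mathcal{Y}}\in F^\perp$ also survives: since $D\G(\xb)d\in\ri(F_{J_-})$, the face $F_{J_-}$ is the minimal face of $\C$ containing $D\G(\xb)d$, and as $F_{J_-}\cap\{\bar{\mathcal{Y}}\}^\perp$ is again a face of $\C$ containing $D\G(\xb)d$ (because $\bar{\mathcal{Y}}\in\C\pol$), minimality forces $F_{J_-}\subseteq\{\bar{\mathcal{Y}}\}^\perp$, i.e.\ $\bar{\mathcal{Y}}\in F_{J_-}^\perp$.

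From this point the remainder of the proof of Theorem~\ref{ssoc-scrsc} applies verbatim: $t=0$ minimizes $\phi(t)=f(\xi(t))$ subject to $t\geq 0$, giving $\phi''(0)\geq 0$; the term $\nabla f(\xb)^T\xi''(0)$ is evaluated via the auxiliary function $R(t)=\langle\G(\xi(t)),\bar{\mathcal{Y}}\rangle$, which vanishes identically for small $t\geq 0$ thanks to $\bar{\mathcal{Y}}\in F_{J_-}^\perp$; and differentiating $R$ twice together with the identity $\sigma(\xb,d,\bar{Y})=-\langle D^2\Xi(G(\xb))[DG(\xb)d,DG(\xb)d],\bar{\mathcal{Y}}\rangle$ delivers~\eqref{eq:ssoc}. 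The only point requiring genuine attention—and the reason the conclusion is weaker than Theorem~\ref{ssoc-scrsc}—is the collapse $F=F_{J_-}$ induced by the restriction $d\in\ri(L_{\Omega}(\xb))$; once this is confirmed, no proper subface of $F_{J_-}$ is ever needed, so the single constant-rank condition of CRSC suffices and nothing else in the argument changes. I expect this identification of the face to be the main (and essentially the sole) obstacle.
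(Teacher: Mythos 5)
Your proof is correct and takes exactly the route the paper intends: the paper's entire justification for this corollary is the observation of Footnote~\ref{footnote:2} that for $d\in\ri(L_{\Omega}(\xb))$ the face $F$ in the proof of Theorem~\ref{ssoc-scrsc} collapses to $F_{J_-}$ (via Lemma~\ref{lem:riL} and the minimality characterization of Lemma~\ref{lem:pataki}), so the constant-rank hypothesis is invoked only for $F_{J_-}^\perp$, which is precisely CRSC, and the rest of that proof runs verbatim. You have simply made explicit the details the paper leaves implicit, including the correct adaptation of the step showing $\bar{\mathcal{Y}}\in F_{J_-}^\perp$.
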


Another interesting result that can be extracted from the proof of the previous theorem is the invariance to Lagrange multiplier of the second-order term \eqref{eq:ssoc} induced by the Hessian of the Lagrangian for each critical direction, stated below. Variants of this result have also been proposed by \cite[Theorem 3.3]{conjnino} for the directions in $\lin(C(\xb))$ and by \cite{gfrerer} for all directions in $\textnormal{span}(C(\xb))$.

\begin{proposition}\label{invariance}
Let $\xb\in \Omega$ satisfy Strong-CRSC. Then, for each $d\in C(\xb)$, the second-order term \eqref{eq:ssoc} is independent of the choice of the Lagrange multiplier $\bar{Y}\in \Lambda(\xb)$, that is, for each $d\in C(\xb)$
\begin{equation}\label{eq-invariance}
	d^T\nabla^2 f(\xb)d+\left\langle D^2 G(\xb)[d,d], \bar{Y}\right\rangle - \sigma(\xb,d,\bar{Y})\mbox{ does not depend on }\bar{Y}.
\end{equation}
\end{proposition}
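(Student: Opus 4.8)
The plan is to read off the invariance directly from the computation already carried out in the proof of Theorem~\ref{ssoc-scrsc}, isolating the part of that argument that does not use local optimality. Fix $d\in C(\xb)$. Exactly as in that proof, let $F$ be the smallest face of $F_{J_-}$ containing $D\G(\xb)d$ in its relative interior; this face depends on $d$ alone. Strong-CRSC then supplies, through Lemma~\ref{extra:curvebuilder} applied with $\mathcal{W}=F^\perp$, a twice differentiable curve $\xi$ with $\xi(0)=\xb$, $\xi'(0)=d$, and $\G(\xi(t))\in\ri(F)$ for all small $t\geq 0$. The decisive structural feature is that $\xi$ is constructed from $d$ and $F$ only, hence it is the \emph{same} curve for every choice of Lagrange multiplier.

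Next I would record the identity that the proof of Theorem~\ref{ssoc-scrsc} establishes. Combining~\eqref{eq:invariance} with the formula $\sigma(\xb,d,\bar{Y})=-\left\langle D^2\Xi(G(\xb))[DG(\xb)d,DG(\xb)d],\bar{\mathcal{Y}}\right\rangle$ yields
\[
	\left\langle D^2 G(\xb)[d,d],\bar{Y}\right\rangle-\sigma(\xb,d,\bar{Y})=\nabla f(\xb)^T\xi^{\prime\prime}(0).
\]
Adding $d^T\nabla^2 f(\xb)d$ to both sides, the left-hand side becomes precisely the expression in~\eqref{eq-invariance}, while the right-hand side equals $\phi^{\prime\prime}(0)$ for $\phi(t)=f(\xi(t))$. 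Since $\xi$ is determined by $d$ and $F$ and involves no multiplier, the right-hand side is manifestly independent of $\bar{Y}$, and the proposition follows.

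The hard part — really the only thing that needs careful checking — is that the identity above is valid for \emph{every} $\bar{Y}\in\Lambda(\xb)$ using that one curve. This reduces to verifying that the derivation of~\eqref{eq:invariance} never invoked the local optimality of $\xb$. Indeed, the step producing $\bar{\mathcal{Y}}\in F^\perp$ used only KKT stationarity (so that $\langle D\G(\xb)d,\bar{\mathcal{Y}}\rangle=0$) together with the minimality of the face $F$ containing $D\G(\xb)d$; consequently $R(t)\doteq\langle\G(\xi(t)),\bar{\mathcal{Y}}\rangle\equiv 0$ near $t=0$ for each multiplier, and differentiating $R$ twice delivers~\eqref{eq:invariance} unconditionally. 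Local optimality entered Theorem~\ref{ssoc-scrsc} solely to sign $\phi^{\prime\prime}(0)\geq 0$, which plays no role here. Thus the identity, and with it the invariance~\eqref{eq-invariance}, holds at any Strong-CRSC point irrespective of whether $\xb$ is a minimizer.
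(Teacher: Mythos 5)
Your proof is correct and takes exactly the paper's route: the paper's own proof of Proposition~\ref{invariance} is the one-line remark that the conclusion follows directly from~\eqref{eq:invariance}, and you have merely made explicit the two facts that justify this — that the curve $\xi$ is built from $d$ and the face $F$ alone (hence $\nabla f(\xb)^T\xi^{\prime\prime}(0)$ is multiplier-free), and that the derivation of~\eqref{eq:invariance} uses only KKT stationarity of $\bar{Y}$ together with $d\in C(\xb)$, never the local optimality of $\xb$, which enters Theorem~\ref{ssoc-scrsc} only to sign $\phi^{\prime\prime}(0)\geq 0$. Both checks are accurate, so your write-up is a faithful, slightly more detailed version of the paper's argument.
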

\begin{proof}
The conclusion follows directly from~\eqref{eq:invariance}.
\end{proof}

Notice that if we assume that the dimension of $D\G(x)^*[F^\perp]$ remains constant for all $x$ in a neighborhood of $\xb$, only for the face $F=\{0\}\in \C$, then~\eqref{eq-invariance} holds true at least for every $d\in\lin(C(\xb))=\{d\in \R^n\colon D\G(\xb)d=0\}$, which generalizes~\cite[Theorem 3.3]{conjnino}.

\section{Facial reduction for nonconvex optimization problems}\label{sec:fr}

Facial reduction is a preprocessing technique originally introduced for the convex case of~\eqref{NCP} by Wolkowicz and Borwein~\cite{wolkowicz1981}, which stood out for inducing strong duality results without constraint qualifications in a mathematically elegant way. Their work was later revisited and improved by Pataki~\cite{pataki2013}, and Waki and Muramatsu~\cite{waki2013}, who provided a very simple derivation for the facial reduction algorithm and its underlying results regarding linear conic problems. 

More precisely, let us assume, for a moment, that the constraint of~\eqref{NCP} has the form $G(x)\doteq Ax+B\in \K$, where $A\colon \R^n\to \E$ is a linear operator and $B\in \E$. Let $\Omega\doteq G^{-1}(\K)\neq\emptyset$ and $\Gamma\doteq \{Ax+B:x\in\Omega\}$. Then $Ax+B\in \K \ \Leftrightarrow \ Ax+B\in \Fmin(\Gamma)$, where the latter satisfies Slater's constraint qualification. Interestingly, in the case of a linear problem, the minimal face $\Fmin(\Gamma)$ can be iteratively computed by considering $F\doteq\K$ and computing $S\in F\pol\cap \Ker A^*\cap \{B\}^\perp$ in order to have $F\cap\{S\}^\perp\faceq\K$ with $\Fmin(\Gamma)\subseteq F\cap\{S\}^\perp$. Now the procedure can be repeated for $F\doteq F\cap\{S\}^\perp$ until $F=\Fmin(\Gamma)$. See \cite{pataki2013} for details.

Our goal is to show that in the general case of \eqref{NCP}, under a set of hypotheses, condition CRSC gives the appropriate tool for allowing the problem to be locally rewritten by replacing the cone with one of its faces, namely:
\begin{equation}
	\begin{array}{ll}
		\nonumber\mbox{\textnormal{Minimize }} 	& f(x),\\ 
		\label{rNCP}\mbox{s.t. }	& \G(x)\in F_{J_-}.
	\end{array}
	\tag{FRed-NCP}
\end{equation}
Recall that Lemma~\ref{lem:riL} tells us that the linearized constraint $D\G(x)d\in \C$ at a given point $\xb\in \F$ coincides with the ``facially reduced'' constraint $D\G(x)d\in F_{J_-}$ at $\xb$, which in turn always satisfies Slater's CQ due to Lemma~\ref{lem:crsc-relaxrob}. Notice that this also implies that Robinson's condition is satisfied at $\xb$ with respect to problem \eqref{rNCP}. 

The extension of the facial reduction procedure to the nonlinear case has been already conducted under CRSC in \cite{cpg} in the context of NLP, which was obtained as a consequence of a result by Lu~\cite{shulu} regarding CRCQ. This interpretation of their result is what motivates the course of this section. Let us start by recalling Lu's result, as stated in~\cite{cpg}:

\begin{proposition}\label{prop:lu}
Let $\zeta_i\colon \R^n\to \R$, $i=1,\dots,s$ be continuously differentiable functions and $\xb$ be such that $\zeta_i(\xb)=0$ for every $i=1,\dots,s$. Assume that the family $\{\nabla \zeta_i(x)\}_{i=1}^s$ has constant rank for all $x$ in a neighborhood of $\xb$. If there exists $\gamma\in\R^s$ with $\gamma_i>0$ for all $i=1,\dots,s$ such that
\[
	\sum_{i=1}^s \nabla \zeta_i(\xb)\gamma_i=0,
\]
then there exists a neighborhood $\mathcal{U}$ of $\xb$ such that for every $i=1,\dots,s$ and every $x\in \mathcal{U}$, we have that $\zeta_i(x)\geqslant0$ if, and only if, $\zeta_i(x)=0$.
\end{proposition}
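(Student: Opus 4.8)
The plan is to prove Proposition~\ref{prop:lu} by reducing it to Proposition~\ref{prop1} (the constant-rank diffeomorphism/curve builder machinery), and then exploiting the positive combination $\sum_i \nabla\zeta_i(\xb)\gamma_i = 0$ to force each $\zeta_i$ to vanish whenever it is non-negative nearby. The core observation is that the constant-rank hypothesis makes the family $\{\nabla\zeta_i\}_{i=1}^s$ ``locally closed'' under linear dependencies: any dependency that holds at $\xb$ must persist in a neighborhood. First I would invoke Proposition~\ref{prop1} with $\mathcal{I}=\{1,\ldots,s\}$ to obtain the diffeomorphism $\psi\colon V_1\to V_2$ with $\psi(\xb)=\xb$, $D\psi(\xb)=\I_n$, and $\zeta_i(\psi^{-1}(\xb+y))=\zeta_i(\xb)=0$ for every $y$ in the subspace $\mathcal{S}=\{y\colon \langle\nabla\zeta_i(\xb),y\rangle=0,\ \forall i\}$ and every $i$. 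This tells us that, after the change of variables, the functions $\zeta_i$ depend (locally, to the precision guaranteed by the proposition) only on the directions orthogonal to $\mathcal{S}$, i.e.\ on the span of the gradients $\{\nabla\zeta_i(\xb)\}$.

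The key step is then to handle the positive-multiplier relation. Define $z_i(x)\doteq \zeta_i(\psi^{-1}(x))$; these have gradients at $\xb$ equal to $\nabla\zeta_i(\xb)$ because $D\psi(\xb)=\I_n$, so the same relation $\sum_i \nabla z_i(\xb)\gamma_i = 0$ holds with all $\gamma_i>0$. Consider the weighted sum $S(x)\doteq \sum_{i=1}^s \gamma_i z_i(x)$. Its gradient vanishes at $\xb$, and along the subspace $\mathcal{S}$ each $z_i$ is constant (equal to $0$) by part (iii) of Proposition~\ref{prop1}, so $S$ is constant equal to $0$ along $\mathcal{S}$. The plan is to argue that $S$ is in fact constant equal to zero on a full neighborhood, or at least that $S(x)=0$ on the feasible-sign region, which then forces each individual $z_i(x)=0$ as soon as all $z_i(x)\geq 0$, since a sum of non-negative terms with positive weights vanishes only if every term vanishes. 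Translating back through $\psi$ (a diffeomorphism, hence sign- and zero-preserving on values of $\zeta_i$), this yields exactly the claimed equivalence $\zeta_i(x)\geq 0 \iff \zeta_i(x)=0$ on a neighborhood $\mathcal{U}$ of $\xb$.

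The main obstacle I anticipate is rigorously establishing that $S$ vanishes not merely along $\mathcal{S}$ but on the relevant neighborhood --- Proposition~\ref{prop1} only guarantees constancy of each $\zeta_i$ along the subspace $\mathcal{S}$, not along directions transverse to it. The honest route is to show that the constant-rank condition, combined with the relation $\sum_i\gamma_i\nabla\zeta_i(x)=0$ propagating to nearby $x$, prevents any $z_i$ from becoming strictly positive while the weighted sum stays controlled. Concretely, I would examine the directional behavior transverse to $\mathcal{S}$: by constant rank, the gradients $\{\nabla\zeta_i(x)\}$ span a space of fixed dimension, and the positive dependency at $\xb$ propagates, so $\sum_i\gamma_i(x)\nabla\zeta_i(x)=0$ for a continuous choice of strictly positive weights $\gamma_i(x)$ near $\xb$ (shrinking the neighborhood to preserve positivity). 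This makes $\sum_i\gamma_i(x)\zeta_i(x)$ behave like a constant along feasible curves, pinning the values. The delicate point is keeping the weights positive and the argument curve-based rather than relying on global constancy; this is precisely where Lu's original argument does the real work, and I would follow that structure, using Proposition~\ref{prop1} to supply the curves along which the $\zeta_i$ are frozen.

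Overall, the proof is a combinatorial-analytic consequence of the constant-rank normal form: the diffeomorphism flattens the constraints along $\mathcal{S}$, the positivity of $\gamma$ converts a vanishing gradient combination into a vanishing value combination, and non-negativity of the $\zeta_i$ does the rest. I expect no heavy computation beyond differentiating the weighted sum and invoking the chain rule through $\psi$; the subtlety is entirely in the propagation of the positive dependency to a neighborhood, which the constant-rank hypothesis is tailored to guarantee.
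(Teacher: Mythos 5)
A preliminary remark: the paper contains no proof of Proposition~\ref{prop:lu} at all --- it is quoted verbatim from Lu~\cite{shulu} as restated in~\cite{cpg} --- so your argument has to stand on its own, and it does not: the load-bearing step is missing, and both ways you propose to fill it fail. The claim that $S(x)\doteq\sum_{i}\gamma_i z_i(x)$ ``is constant equal to zero on a full neighborhood'' is simply false under the hypotheses: take $n=1$, $s=2$, $\zeta_1(x)=x+x^2$, $\zeta_2(x)=-x$, $\gamma=(1,1)$. The gradients $(1+2x,-1)$ have constant rank one near $\xb=0$ and $\nabla\zeta_1(0)+\nabla\zeta_2(0)=0$, yet $S(x)=x^2>0$ for $x\neq 0$ (composing with $\psi$, which is the identity to first order, cannot change this), while the conclusion of the proposition does hold there since the locally feasible set is $\{0\}$. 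Your fallback, ``$S(x)=0$ on the feasible-sign region,'' is not an intermediate step but a verbatim restatement of the conclusion (for $\gamma_i>0$ and $z_i(x)\geq 0$, $S(x)=0$ holds if and only if all $z_i(x)=0$), so assuming it is circular. Moreover Proposition~\ref{prop1} contributes essentially nothing here: it freezes the $\zeta_i$ along the subspace $\mathcal{S}$, whereas the entire content of the proposition concerns what happens transverse to $\mathcal{S}$ --- in the example above $\mathcal{S}=\{0\}$ and item (iii) is vacuous.

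The one genuinely correct new ingredient in your sketch --- propagating the dependency to continuous weights $\gamma(x)>0$ with $\sum_i \gamma_i(x)\nabla\zeta_i(x)=0$, by projecting $\gamma$ onto the kernel of $d\mapsto\sum_i d_i\nabla\zeta_i(x)$, which varies continuously under constant rank --- is true, but you never extract the conclusion from it. ``Behaves like a constant along feasible curves'' is false as stated: along a curve $x(t)$ one has $\frac{d}{dt}\langle\gamma(x(t)),\zeta(x(t))\rangle=\langle \frac{d}{dt}\gamma(x(t)),\zeta(x(t))\rangle$, which vanishes only on the very zero set you are trying to reach (in the example, $\gamma(x)=(1,1+2x)$ gives $\langle\gamma(x),\zeta(x)\rangle=-x^2$, not constant); a Gronwall-type repair would additionally require joining feasible points to $\xb$ by curves staying inside $\{\zeta\geq 0\}$, which is not available a priori. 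Your explicit deferral of ``the real work'' to Lu's original argument is an admission that the key step is absent. For the record, the standard route is via the $C^1$ constant rank theorem rather than Proposition~\ref{prop1}: locally the image $M\doteq\zeta(\mathcal{U}')$ of $\zeta=(\zeta_1,\dots,\zeta_s)$ is a $C^1$ manifold of dimension $r$ (the constant rank) with $T_0M=\Im D\zeta(\xb)\subseteq\{\gamma\}^{\perp}$; if there were $x^k\to\xb$ with $\zeta(x^k)\geq 0$ and $\zeta(x^k)\neq 0$, any limit $v$ of $\zeta(x^k)/\|\zeta(x^k)\|$ would lie in $T_0M$, satisfy $v\geq 0$ and $\langle\gamma,v\rangle=0$ with $\gamma>0$, forcing $v=0$ and contradicting $\|v\|=1$. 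That tangency-versus-positivity argument is the missing idea your proposal circles around but never lands on.
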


In order to adapt this proposition the the conic context, let us consider first the following well-known result regarding a characterization of the relative interior. We present a proof\footnote{Thanks to Joe Higgins for providing this ingenious proof.} for completeness:

\begin{lemma}\label{lem:carath2}
Let $C\subseteq\E$ be a closed convex cone with dimension $s\geqslant 1$, and let $0\neq Y\in \ri(C)$. Then, there exist some linearly independent vectors $\eta_1,\ldots,\eta_s\in C$ such that $Y=\sum_{i=1}^s\alpha_i \eta_i$ with $\alpha_i>0$ for every $i\in \{1,\ldots,s\}$.
\end{lemma}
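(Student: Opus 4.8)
The plan is to realize $Y$ as the barycenter of $s$ carefully perturbed copies of itself. Write $V\doteq\spn(C)$, so that $\dim V=s$; since $C$ is a convex cone containing $0$, its affine hull is the subspace $V$, and hence the relative interior $\ri(C)$ is exactly the interior of $C$ taken relative to $V$. Thus there is a radius $r>0$ such that every $z\in V$ with $\|z-Y\|<r$ belongs to $C$. The idea is then to displace $Y$ along $s$ directions $b_1,\dots,b_s\in V$ that sum to zero, staying inside this relative ball; the vanishing sum forces $Y$ to be an average of the displaced points, producing positive coefficients automatically, while a suitable choice of directions guarantees linear independence.

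Concretely, since $Y\neq 0$ the subspace $U\doteq\{z\in V\colon \langle z,Y\rangle=0\}$ has dimension $s-1$ and does not contain $Y$. I would fix a basis $b_1,\dots,b_{s-1}$ of $U$ and set $b_s\doteq-\sum_{i=1}^{s-1}b_i\in U$, so that $\sum_{i=1}^s b_i=0$. Choosing $\varepsilon>0$ small enough that $\varepsilon\|b_i\|<r$ for every $i$, define $\eta_i\doteq Y+\varepsilon b_i$, which lies in $C$ by the previous paragraph. Summing gives $\sum_{i=1}^s\eta_i=sY+\varepsilon\sum_{i=1}^s b_i=sY$, hence $Y=\sum_{i=1}^s\tfrac{1}{s}\eta_i$, a representation with all coefficients $\alpha_i=\tfrac{1}{s}>0$, as required. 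When $s=1$ this degenerates harmlessly to $\eta_1=Y$.

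The one substantive point -- and the step I expect to be the main obstacle -- is checking that $\eta_1,\dots,\eta_s$ are linearly independent, since positivity came almost for free. Suppose $\sum_i\lambda_i\eta_i=0$, i.e.\ $\big(\sum_i\lambda_i\big)Y+\varepsilon\sum_i\lambda_i b_i=0$. Using $b_s=-\sum_{i<s}b_i$, the second sum rewrites as $\sum_{i=1}^{s-1}(\lambda_i-\lambda_s)b_i$. Because $Y\notin U$ and $\dim U=s-1$, the family $\{Y,b_1,\dots,b_{s-1}\}$ is a basis of $V$; matching coordinates in this basis forces $\sum_i\lambda_i=0$ together with $\lambda_i=\lambda_s$ for every $i<s$, whence all the $\lambda_i$ are equal and their sum vanishes, so $\lambda_i=0$ for all $i$. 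This establishes linear independence and completes the argument. The only external facts used are the identification of $\ri(C)$ with the interior of $C$ relative to $V$ for a convex cone and elementary linear algebra; in particular, closedness of $C$ is not actually needed.
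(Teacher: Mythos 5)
Your proof is correct and is essentially the paper's own argument: your $U=\{z\in\spn(C)\colon\langle z,Y\rangle=0\}$ is exactly the paper's $\spn(C)\cap\{Y\}^\perp$, and you use the same perturbations $\eta_i=Y+\varepsilon b_i$ with the zero-sum direction $b_s=-\sum_{i<s}b_i$, the same coefficients $\alpha_i=1/s$, and the same linear-independence computation. Your added remarks (the explicit relative-ball justification for $\eta_i\in C$, the degenerate $s=1$ case, and the observation that closedness of $C$ is never used) are accurate but do not change the route.
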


\begin{proof}
Let $Z_1,\ldots,Z_{s-1}$ be a basis of $\spn(C)\cap \{Y\}^\perp$ and define $Z_s\doteq -(Z_1+\ldots+Z_{s-1})$. Now, let $\varepsilon>0$ be such that $\eta_i\doteq Y+\varepsilon Z_i\in \ri(C)$ for every $i\in \{1,\ldots,s\}$ and note that $\eta_1,\ldots,\eta_s$ are linearly independent, since
\[
	\beta_1 \eta_1+\ldots+\beta_s \eta_s=(\beta_1+\ldots+\beta_s)Y+\varepsilon(\beta_1-\beta_s)Z_1+\ldots+\varepsilon(\beta_{s-1}-\beta_s) Z_{s-1}=0
\]
implies $\beta_1=\ldots=\beta_{s-1}=\beta_s=0$. Define $\alpha_i\doteq 1/s>0$ for every $i\in \{1,\ldots,s\}$ to conclude the proof.
\end{proof}

To state the generalization of Proposition~\ref{prop:lu} to the conic programming context, we define the {\it conjugate face} $F^{\triangle}$ of a face $F\faceq\C$ as $F^{\triangle}\doteq-\C\pol\cap F^\perp$. Following Pataki~\cite{pataki2007closedness}, for a face $H\faceq -\C\pol$ we use the same notation $H^{\triangle}$ to describe the conjugate face $H^{\triangle}\doteq \C\cap H^\perp$ and the face $F^{\triangle\triangle}$ is called the \textit{double conjugate} of $F$. Moreover, if $\C$ is {\it nice}, that is, if $\C\pol+F^\perp$ is closed for every $F\faceq\C$, then $F^{\triangle\triangle}=F$ for every face $F\faceq \C$~\cite{pataki2007closedness,pataki2013}, so through the following lines we shall assume that $\C$ is nice. Recall that the second-order and the semidefinite cones are nice -- see, for instance, \cite[Section 2.5]{pataki2007closedness}.

\begin{proposition}\label{prop:frlu}
Let $\xb\in \Omega$ and let $F\faceq \C$ be such that:
\begin{enumerate}
\item There exists some $0\neq Y\in \ri(F^{\triangle})$ such that $D\G(\xb)^*[Y]=0$;
\item The dimension of $D\G(x)\adj[\spn(F^{\triangle})]$ remains constant in a neighborhood $\mathcal{V}$ of $\xb$.
\end{enumerate}
Then, there exists a neighborhood $\mathcal{U}$ of $\xb$ such that $\G^{-1}(\C)\cap \mathcal{U}=\G^{-1}(F)\cap \mathcal{U}$.
\end{proposition}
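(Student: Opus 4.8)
The plan is to reduce this conic statement to the nonlinear-programming situation handled by Proposition~\ref{prop:lu} (Lu's result) by building an appropriate finite family of scalar constraint functions out of a basis of $\spn(F^{\triangle})$. Concretely, let $s\doteq\dim(F^{\triangle})$ and invoke Lemma~\ref{lem:carath2} on the cone $F^{\triangle}$: since by hypothesis $0\neq Y\in\ri(F^{\triangle})$, there exist linearly independent $\eta_1,\ldots,\eta_s\in F^{\triangle}$ and coefficients $\alpha_i>0$ with $Y=\sum_{i=1}^s\alpha_i\eta_i$. I would then define the scalar functions $\zeta_i(x)\doteq\langle\G(x),\eta_i\rangle$. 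Because $\G(\xb)=0$ we have $\zeta_i(\xb)=0$, and $\nabla\zeta_i(x)=D\G(x)^*[\eta_i]$, so the constant-dimension hypothesis~(2) on $D\G(x)^*[\spn(F^{\triangle})]$ translates exactly (as in the proof of Lemma~\ref{extra:curvebuilder}, via \eqref{eq:rank}) into the constant-rank hypothesis on $\{\nabla\zeta_i(x)\}_{i=1}^s$ needed by Proposition~\ref{prop:lu}.

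Next I would verify the positive-combination hypothesis of Proposition~\ref{prop:lu}. Using $Y=\sum_i\alpha_i\eta_i$ and hypothesis~(1), which gives $D\G(\xb)^*[Y]=0$, linearity of the adjoint yields
\[
	\sum_{i=1}^s\alpha_i\nabla\zeta_i(\xb)=\sum_{i=1}^s\alpha_i D\G(\xb)^*[\eta_i]=D\G(\xb)^*\Big[\sum_{i=1}^s\alpha_i\eta_i\Big]=D\G(\xb)^*[Y]=0,
\]
with every $\alpha_i>0$. Proposition~\ref{prop:lu} then delivers a neighborhood $\mathcal{U}$ of $\xb$ on which, for each $i$ and each $x\in\mathcal{U}$, $\zeta_i(x)\geq0$ if and only if $\zeta_i(x)=0$. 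I would record the consequence I actually need: on $\mathcal{U}$, if $\zeta_i(x)\geq0$ for all $i$, then in fact $\zeta_i(x)=0$ for all $i$, i.e.\ $\langle\G(x),\eta_i\rangle=0$ for all $i$; since the $\eta_i$ span $F^{\triangle}$, this says $\G(x)\in(F^{\triangle})^\perp$.

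The final step is to translate these scalar conclusions back into the face statement $\G^{-1}(\C)\cap\mathcal{U}=\G^{-1}(F)\cap\mathcal{U}$. The inclusion $\G^{-1}(F)\cap\mathcal{U}\subseteq\G^{-1}(\C)\cap\mathcal{U}$ is immediate from $F\subseteq\C$. For the reverse inclusion, take $x\in\mathcal{U}$ with $\G(x)\in\C$. Since each $\eta_i\in F^{\triangle}\subseteq-\C\pol$, we have $-\eta_i\in\C\pol$ and hence $\langle\G(x),-\eta_i\rangle\leq0$, i.e.\ $\zeta_i(x)=\langle\G(x),\eta_i\rangle\geq0$ for every $i$. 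By the consequence recorded above, $\zeta_i(x)=0$ for all $i$, so $\G(x)\in(F^{\triangle})^\perp$; combined with $\G(x)\in\C$ this gives $\G(x)\in(F^{\triangle})^\perp\cap\C$, which equals $F$ by hypothesis~(3). Thus $x\in\G^{-1}(F)$, completing the equality (possibly shrinking $\mathcal{U}$ so that it is contained in the reduction neighborhood where $\G$ is defined).

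The main obstacle I anticipate is purely bookkeeping rather than conceptual: making sure that the sign conventions in the definition $F^{\triangle}=-\C\pol\cap F^\perp$ line up so that $\eta_i\in F^{\triangle}$ forces $\zeta_i(x)\geq0$ on feasible points (this is where the minus sign in the conjugate face is used), and confirming that hypothesis~(3) is exactly the algebraic identity needed to close the loop. One should also note the degenerate case $s=0$ (equivalently $F^{\triangle}=\{0\}$), which is incompatible with hypothesis~(1) requiring a nonzero $Y\in\ri(F^{\triangle})$, so $s\geq1$ and Lemma~\ref{lem:carath2} applies; when $F=\C$ the statement is vacuous. No genuinely hard estimate is involved—the content is entirely in dressing Lu's lemma in conic clothing.
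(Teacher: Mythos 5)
Your proposal is correct and takes essentially the same route as the paper's own proof: both invoke Lemma~\ref{lem:carath2} to write $Y=\sum_{i=1}^s\alpha_i\eta_i$ with $\alpha_i>0$ over a basis of $\spn(F^{\triangle})$, define $\zeta_i(x)\doteq\langle\G(x),\eta_i\rangle$ so that hypothesis~(2) becomes the constant-rank hypothesis and hypothesis~(1) becomes the positive-combination hypothesis of Proposition~\ref{prop:lu}, and then use $\eta_i\in-\C\pol$ together with hypothesis~(3) via $(F^{\triangle})^{\perp}\cap\C=F$ to conclude. The only differences are cosmetic --- you make explicit the verification $\zeta_i(\xb)=0$, the trivial inclusion $\G^{-1}(F)\subseteq\G^{-1}(\C)$, and the degeneracy check $s\geq1$, all of which the paper states in passing or leaves implicit.
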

%
%
%
%

\begin{proof}
Let $s\doteq \dim(\spn(F^{\triangle}))$ and let $0\neq Y\in \ri(F^{\triangle})$ be such that $D\G(\xb)^*[Y]=0$; this implies that $s\geqslant 1$. Also, let $\eta_1,\ldots,\eta_s\in F^{\triangle}$ be the vectors described in Lemma~\ref{lem:carath2}, which form a basis of $\spn(F^{\triangle})$ and
\[
	Y=\sum_{i=1}^s\alpha_i \eta_i,
\]
for some vector of positive scalars $\alpha\doteq(\alpha_1,\ldots,\alpha_s)$. Now, consider the functions
\[
	\zeta_i(x)\doteq \langle \G(x),\eta_i\rangle, \quad i\in \{1,\ldots,s\}
\]
along with the vector function $\zeta(x)\doteq (\zeta_1(x),\ldots,\zeta_s(x))$, and note that $\nabla \zeta_i(x)=D\G(x)^*[\eta_i]$ for every $i\in\{1,\ldots,s\}$. In particular, this implies that 
\[
	\rank(\{\nabla \zeta_i(x)\}_{i\in\{1,\ldots,s\}})=\dim(D\G(x)^*[\spn(F^{\triangle})])
\] for every $x\in \R^n$ and moreover,
\begin{equation}\label{eq:gradzeta}
D\zeta(\xb)^\T\alpha=D\G(\xb)^*[Y]=0.
\end{equation}
Applying Proposition~\ref{prop:lu}, we obtain a neighborhood $\mathcal{U}$ of $\xb$ such that $\zeta(x)=0$ for every $x\in \mathcal{U}$ such that $\zeta(x)\geqslant 0$. However, note that $\eta_i\in -\C\pol$ for every $i\in\{1,\ldots,s\}$, so in particular this holds for every $x\in \Omega\cap \mathcal{U}$ because
\[
	\begin{aligned}
	\Omega 
	& = \left\{x\in \R^n\colon \langle \G(x),\eta\rangle\geqslant 0, \ \forall \eta\in -\C\pol\right\}\\	
	&  \subseteq \left\{x\in \R^n\colon \zeta(x)\geqslant 0\right\}.
	\end{aligned}
\]
Furthermore, since $\zeta(x)=0$ is equivalent to $\G(x)\in (F^{\triangle})^{\perp}$ we use the niceness of $\C$ to obtain $(F^{\triangle})^{\perp}\cap \C=F^{\triangle\triangle}=F$ and conclude that
\[
	\G^{-1}(\C)\cap \mathcal{U}=\G^{-1}((F^{\triangle})^{\perp}\cap \C)\cap \mathcal{U}=\G^{-1}(F)\cap \mathcal{U}.
\]
\end{proof}

In order to connect this result with CRSC, we will make use of the following characterization of the closedness of the image of a closed convex cone by a linear operator in terms of minimal faces:

\begin{theorem}[Theorem 1 of~\cite{pataki2007closedness}]\label{thm:closedness}
Let $C\subseteq\mathbb{F}$ be a nice closed convex cone, let $M:\E\to\mathbb{F}$ be a linear operator, and $F\doteq \Fmin(\Im (M)\cap C)$. The following statements are equivalent:
\begin{itemize}
\item $M^*[C\pol]$ is closed;
\item $\ri(F^{\triangle})\cap \Ker(M^*)\neq \emptyset$ and $\Im(M) \cap (F^{\triangle})^{\perp}=\Im(M)\cap \spn(F)$;
\item $M^*[F^{\triangle}]=M^*[F^\perp]$.
\end{itemize}
\end{theorem}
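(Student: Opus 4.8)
The plan is to reduce everything to a statement about the dual cone $\C\s\doteq-\C\pol$ together with a fixed subspace, and then to analyse that object facially. Since $M^*[\C\pol]=-M^*[\C\s]$, statement (1) is equivalent to closedness of $M^*[\C\s]$. Writing $L\doteq\Ker(M^*)=(\Im M)^\perp$, the elementary saturation identity $(M^*)^{-1}(M^*[\C\s])=\C\s+L$ shows that $M^*[\C\s]$ is closed if and only if $\C\s+L$ is closed (the nontrivial direction using that $M^*$ is a linear isomorphism modulo its kernel). Finally, bipolarity gives $(\C\s+L)\pol=-\C\cap L^\perp=-(\C\cap\Im M)$, whence $\cl(\C\s+L)=(\C\cap\Im M)\s$, so (1) holds exactly when $\C\s+L=(\C\cap\Im M)\s$. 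This is where the minimal face enters: $F=\Fmin(\C\cap\Im M)$ is precisely the face of $\C$ that governs the boundary of this dual cone, so I would keep all the analysis inside $\mathbb{F}$ and centred on $F$.

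Next I would dispose of the equivalence (2)$\Leftrightarrow$(3) by pure linear algebra. Because $M^*$ is injective modulo $L$, condition (3), $M^*[F^{\triangle}]=M^*[F^\perp]$, is equivalent to $F^\perp\subseteq F^{\triangle}+L$. Using that $F^{\triangle}+\ri(F^{\triangle})\subseteq\ri(F^{\triangle})$ for the cone $F^{\triangle}$, the first half of (2), namely $\ri(F^{\triangle})\cap L\neq\emptyset$, is equivalent to $F^{\triangle}+L=\spn(F^{\triangle})+L$ being a subspace; granting this, $F^\perp\subseteq F^{\triangle}+L$ reads $F^\perp\subseteq\spn(F^{\triangle})+L$, whose orthogonal-complement form is exactly $\Im M\cap(F^{\triangle})^\perp\subseteq\Im M\cap\spn(F)$, that is, the image equality in the second half of (2) (its reverse inclusion being automatic since $\spn(F)\subseteq(F^{\triangle})^\perp$). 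Conversely, if (3) holds then for any $Y_0\in\ri(F^{\triangle})$ we have $-Y_0\in\spn(F^{\triangle})\subseteq F^\perp\subseteq F^{\triangle}+L$, say $-Y_0=y+\ell$ with $y\in F^{\triangle}$, $\ell\in L$; then $Y_0+y=-\ell\in\ri(F^{\triangle})\cap L$, recovering the first half of (2) as well.

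The substantive part is (1)$\Leftrightarrow$(3), and here niceness is essential. I would use its two standard consequences: $\C\pol+F^\perp$ (equivalently $\C\s+F^\perp$) is closed, which is literally the definition of niceness, and $\C\cap(F^{\triangle})^\perp=F$, which appears as hypothesis 3 of Proposition~\ref{prop:frlu}. Because $F=\Fmin(\C\cap\Im M)$, the arguments of Lemmas~\ref{lem:frlin} and~\ref{lem:riL} apply verbatim with $D\G(\xb)$ replaced by $M$ and $\C$ by $-\C$, giving $M^{-1}(-\C)=M^{-1}(-F)$ and $\ri(M^{-1}(-\C))=M^{-1}(-\ri(F))$ via Lemma~\ref{lem:preimage-ri}; combined with the Carathéodory-type Lemma~\ref{lem:carath2} this produces a strictly complementary dual certificate $Y\in\ri(F^{\triangle})$ realised as a positive combination of an independent basis of $\spn(F^{\triangle})$. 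For (3)$\Rightarrow$(1) I would then argue, by a facial-reduction step, that the closure $(\C\cap\Im M)\s=\cl(\C\s+L)$ is actually attained in $\C\s+L$: the subspace $M^*[F^{\triangle}]=M^*[F^\perp]$ pins down the directions along which the boundary could fail to be attained, and the equality $\C\cap(F^{\triangle})^\perp=F$ together with the closedness of $\C\s+F^\perp$ rules out any escaping recession direction. The reverse implication (1)$\Rightarrow$(3) is dual: from $\C\s+L=(\C\cap\Im M)\s$ one extracts that the certificate $Y$ lies in $F^{\triangle}\cap L$ and that no boundary direction is lost, which is precisely condition (2), hence (3).

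The main obstacle I expect is exactly this closure/attainment analysis underlying (1)$\Leftrightarrow$(3): showing that, once $F$ and $F^{\triangle}$ are fixed, the closed cone $(\C\cap\Im M)\s$ coincides with $\C\s+L$ precisely under the face conditions. Niceness cannot be dispensed with, since it is what guarantees both that $\C\s+F^\perp$ is closed and that $\C\cap(F^{\triangle})^\perp=F$, so that the facial reduction neither manufactures spurious directions nor discards genuine ones; a rank-deficient semidefinite instance (with $\C=\S^2_+$, $\Im M=\{A:A_{22}=0\}$) makes this transparent, as there the missing boundary directions of $(\C\cap\Im M)\s$ are exactly those witnessed by $M^*[F^{\triangle}]\subsetneq M^*[F^\perp]$. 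By comparison, the reduction of the first paragraph and the linear-algebraic equivalence (2)$\Leftrightarrow$(3) are routine, so the proof effort concentrates entirely on the closedness step.
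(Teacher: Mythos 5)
A preliminary remark: the paper contains no proof of Theorem~\ref{thm:closedness} to compare against --- the result is imported, statement and proof, from Pataki~\cite{pataki2007closedness} --- so your attempt must stand on its own. Its solid parts are indeed solid. The reduction of (1), via the saturation identity $(M^*)^{-1}(M^*[C\s])=C\s+L$ with $C\s\doteq -C\pol$ and $L\doteq\Ker(M^*)=(\Im (M))^\perp$, to the identity $C\s+L=(C\cap\Im (M))\s$ is correct; and your linear-algebraic proof of (2)$\Leftrightarrow$(3) is complete: the rewriting of (3) as $F^\perp\subseteq F^{\triangle}+L$, the equivalence of $\ri(F^{\triangle})\cap L\neq\emptyset$ with $F^{\triangle}+L$ being the subspace $\spn(F^{\triangle})+L$, and the passage to orthogonal complements (the reverse inclusion being free from $\spn(F)\subseteq (F^{\triangle})^\perp$) all check out, and they mirror Pataki's own treatment of that equivalence.

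The genuine gap is (1)$\Leftrightarrow$(3), which you yourself call ``the main obstacle'' and then do not prove: ``pins down the directions along which the boundary could fail to be attained'' and ``rules out any escaping recession direction'' describe what a proof would accomplish without supplying one. Worse, the one concrete device you invoke is circular: Lemma~\ref{lem:carath2} merely decomposes a point of $\ri(F^{\triangle})$ that is already given; it cannot ``produce a strictly complementary dual certificate'' $Y\in\ri(F^{\triangle})\cap\Ker(M^*)$, because the existence of such a $Y$ is the first half of condition (2) --- one of the three statements being characterized, and exactly what the paper later extracts from this theorem when proving Theorem~\ref{ncp:frcrsc}. What your facial-reduction lemmas do yield is the \emph{primal} certificate, namely some $d$ with $Md\in\ri(F)$; you obtain it (modulo stray minus signs) and then never use it.

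Used correctly, that certificate closes the gap in a few lines. Put $x\doteq Md\in\ri(F)\cap\Im(M)$, which exists by minimality of $F$ (Lemma~\ref{lem:pataki}, exactly as in Lemma~\ref{lem:crsc-relaxrob}). Since $C\cap\Im(M)=F\cap\Im(M)$ and $\ri(F)\cap\Im(M)\neq\emptyset$, the polar of this intersection needs no closure operation (Rockafellar, Corollary~16.4.2), so $(C\cap\Im(M))\s=F\s+L$; combined with your first reduction, (1) is equivalent to $F\s\subseteq C\s+L$, which in particular forces $F^\perp\subseteq C\s+L$ because $F^\perp\subseteq F\s$. For (1)$\Rightarrow$(3): given $w\in F^\perp$, write $w=c+\ell$ with $c\in C\s$, $\ell\in L$; pairing with $x$ gives $\langle c,x\rangle=0$, and an element of $C\s$ is nonnegative on $F$, so vanishing at a point of $\ri(F)$ it must vanish on all of $F$; hence $c\in C\s\cap F^\perp=F^{\triangle}$ and $F^\perp\subseteq F^{\triangle}+L$ --- no niceness used. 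For (3)$\Rightarrow$(1): niceness enters exactly once --- from $F=C\cap\spn(F)$ one has $F\s=\cl(C\s+F^\perp)$, which niceness upgrades to $F\s=C\s+F^\perp$, whence (3) yields $F\s+L=C\s+F^\perp+L\subseteq C\s+F^{\triangle}+L\subseteq C\s+L$, so $C\s+L$ equals the closed cone $(C\cap\Im(M))\s$. This also corrects your closing claims: niceness is indispensable only for the sufficiency direction, and only through closedness of $C\s+F^\perp$; the identity $C\cap(F^{\triangle})^\perp=F$ (facial exposedness), which you list as a second standard consequence of niceness, is a nontrivial separate theorem of Pataki and is nowhere needed in this proof --- consistently, the paper treats it as an extra hypothesis (item 3 of Proposition~\ref{prop:frlu}, assumption A3 of Theorem~\ref{ncp:frcrsc}) rather than as a consequence of niceness.
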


We are now finally able to state the main result of this section, which is a generalization of the result of \cite{cpg}, and which can be interpreted as a facial reduction for the general \eqref{NCP}:

\begin{theorem}\label{ncp:frcrsc}
Let $\xb\in \Omega$ satisfy CRSC, suppose that $\C$ is nice, and let $F_{J_-}\doteq \Fmin(\Gamma_{\C}(\xb))$, with $\Gamma_{\C}(\xb)\doteq D\G(\xb)[L_{\Omega}(\xb)]$ and $L_{\Omega}(\xb)=\{d\in \R^n\colon D\G(\xb)[d]\in \C\}$. Assume that there exists a neighborhood $\mathcal{V}$ of $\xb$ such that:
\begin{itemize}
\item [\textbf{A1.}] $H(x)\doteq D\G(x)^*[\C\pol]$ is closed for every $x\in\mathcal{V}$;
\item [\textbf{A2.}] $F_{J_-}=\Fmin(\Gamma_{\C}(x))$, for every $x\in \mathcal{V}$.
\end{itemize}
Then, there exists a neighborhood $\mathcal{U}$ of $\xb$ such that
\[
	\G^{-1}(\C)\cap \mathcal{U}=\G^{-1}(F_{J_-})\cap \mathcal{U}.
\]
\end{theorem}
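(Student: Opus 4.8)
The plan is to verify the three hypotheses of Proposition~\ref{prop:frlu} at $\xb$ for the face $F=F_{J_-}=\Fmin(\Gamma_{\C}(\xb))$, since its conclusion is exactly the desired local equality $\G^{-1}(\C)\cap \mathcal{U}=\G^{-1}(F_{J_-})\cap \mathcal{U}$. Assumption \textbf{A3} already supplies hypothesis~3 of Proposition~\ref{prop:frlu}, so the work lies in establishing hypotheses~1 and~2, and this is where I intend to invoke the niceness of $\C$ together with the closedness assumption \textbf{A1} via Theorem~\ref{thm:closedness}.

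First I would set up Theorem~\ref{thm:closedness} with the linear operator $M\doteq D\G(\xb)$, so that $M^*[\C\pol]=D\G(\xb)^*[\C\pol]=H(\xb)$, which is closed by \textbf{A1} (at $x=\xb$). The minimal face appearing in that theorem is $\Fmin(\Im(D\G(\xb))\cap \C)$; the key identification I would make is that this face coincides with $F_{J_-}=\Fmin(\Gamma_{\C}(\xb))=\Fmin(D\G(\xb)[L_\Omega(\xb)])$. This should follow because $\Gamma_{\C}(\xb)=D\G(\xb)[D\G(\xb)^{-1}(\C)]=\Im(D\G(\xb))\cap \C$, the last equality being a direct set-theoretic computation. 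With this identification, the first equivalent condition of Theorem~\ref{thm:closedness} ($M^*[\C\pol]$ closed) yields the second one, whose first half states precisely $\ri(F_{J_-}^{\triangle})\cap \Ker(D\G(\xb)^*)\neq\emptyset$; picking any $Y$ in this intersection gives hypothesis~1 of Proposition~\ref{prop:frlu}. Note that $Y\neq 0$ here because $0\in\Ker(D\G(\xb)^*)$ trivially but we need a relative-interior point; if $F_{J_-}^{\triangle}=\{0\}$ then $F_{J_-}=\C$ and the conclusion is immediate, so I would dispose of that degenerate case separately.

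For hypothesis~2 of Proposition~\ref{prop:frlu}, I need the dimension of $D\G(x)^*[\spn(F_{J_-}^{\triangle})]$ to be locally constant. This is where assumption \textbf{A2} enters: since $F=\Fmin(\Gamma_{\C}(x))$ for all $x\in\mathcal{V}$, the conjugate face $F^{\triangle}$ (and hence $\spn(F^{\triangle})$) is the same subspace for every such $x$, reducing the requirement to the constant rank of $D\G(x)^*$ restricted to a fixed subspace. To obtain that constancy I would use the third equivalent condition of Theorem~\ref{thm:closedness}, namely $M^*[F^{\triangle}]=M^*[F^\perp]$, applied at each $x\in\mathcal{V}$ using \textbf{A1} (closedness of $H(x)$ throughout $\mathcal{V}$); combined with the CRSC hypothesis at $\xb$, which by Definition~\ref{def:crsc} gives constancy of $\dim D\G(x)^*[F_{J_-}^\perp]$, this transfers to constancy of $\dim D\G(x)^*[\spn(F_{J_-}^{\triangle})]=\dim D\G(x)^*[F_{J_-}^{\triangle}]$ on $\mathcal{V}$.

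The main obstacle I anticipate is this last step: carefully threading \textbf{A2} and the equivalence $M^*[F^{\triangle}]=M^*[F^\perp]$ together to convert the CRSC constancy (stated for $F_{J_-}^\perp$) into constancy for $\spn(F_{J_-}^{\triangle})$, while keeping $F^{\triangle}$ genuinely constant over the whole neighborhood rather than only at $\xb$. The delicate point is that Theorem~\ref{thm:closedness} must be legitimately applicable at every $x\in\mathcal{V}$, which requires that the minimal face $\Fmin(\Im(D\G(x))\cap\C)$ equals $F$ for all such $x$ — precisely what \textbf{A2} guarantees — so that the same conjugate face governs the identity $M^*[F^{\triangle}]=M^*[F^\perp]$ uniformly. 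Once hypotheses~1–3 are in place at $\xb$, the conclusion is simply a citation of Proposition~\ref{prop:frlu}.
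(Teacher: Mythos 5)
Your proposal is correct and follows essentially the same route as the paper's own proof: the identification $\Gamma_{\C}(\xb)=\Im(D\G(\xb))\cap\C$, the application of Theorem~\ref{thm:closedness} at $\xb$ (via CRSC/\textbf{A1}) to produce $0\neq Y\in\ri(F_{J_-}^{\triangle})\cap\Ker(D\G(\xb)^*)$, the use of \textbf{A1}--\textbf{A2} to apply the same theorem at every $x\in\mathcal{V}$ so that $D\G(x)^*[F^{\triangle}]=D\G(x)^*[F^{\perp}]$ converts the CRSC constancy into constancy of $\dim D\G(x)^*[\spn(F^{\triangle})]$, and the final citation of Proposition~\ref{prop:frlu} with \textbf{A3} as its third hypothesis. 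Your explicit treatment of the degenerate case $F_{J_-}^{\triangle}=\{0\}$ (where \textbf{A3} forces $F_{J_-}=\C$ and the conclusion is trivial) is in fact slightly more careful than the paper, which asserts $Y\neq0$ without comment.
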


\begin{proof}
First, note that
\[
	\Gamma_{\C}(\xb)=D\G(\xb)[D\G(\xb)^{-1}(\C)]=\Im(D\G(\xb))\cap \C
\]
so applying Theorem~\ref{thm:closedness} with $M\doteq D\G(\xb)$, $C\doteq \C$, and $F\doteq F_{J_-}$, we see that there exists some $0\neq Y\in \ri(F_{J_-}^{\triangle})$ such that $D\G(\xb)^*[Y]=0$ because in this case $M^*[C\pol]=H(\xb)$ is closed. By CRSC the dimension of $D\G(x)^*[F_{J_-}^{\perp}]$ remains constant for every $x$ in a neighborhood of $\xb$, which we can assume to be $\mathcal{V}$ without loss of generality. On the other hand, by A1 and A2, $D\G(x)^*[\C\pol]$ is closed for every $x\in \mathcal{V}$ and $F_{J_-}$ is the minimal face of $\Gamma_{\C}(x)$, therefore the dimension of $D\G(x)^*[\spn(F_{J_-}^{\triangle})]$ also remains constant for every $x\in\mathcal{V}$ since by Theorem~\ref{thm:closedness} we have that \[D\G(x)^*[F_{J_-}^{\triangle}] = D\G(x)^*[F_{J_-}^{\perp}]\] for every such $x$; Proposition~\ref{prop:frlu} then gives us a neighborhood $\mathcal{U}$ of $\xb$ such that $\G^{-1}(\C)\cap \mathcal{U}=\G^{-1}(F_{J_-})\cap \mathcal{U}$.
\end{proof}

It is worth noticing that in NLP assumptions A1 and A2 are not required due to the polyhedricity of $\C$ and \cite[Lemma 5.4]{cpg}. Whether hypotheses A1 and A2 can be removed from Theorem~\ref{ncp:frcrsc} in the general case is an open problem. What follows is an example that conforms to Theorem~\ref{ncp:frcrsc}.

\begin{example}
	Consider the semidefinite constraint 
	\[
	G(x)\doteq \begin{bmatrix}
		-x_1+x_2^2+ x_2 & \;\;\;\; 2x_1 - 2x_2^2 + x_2  & \;\;\;\; x_1 - x_2^2 - x_2 \\
		2 x_1 - 2 x_2^2 + x_2 & \;\;\;\; -x_1 + x_2^2 + x_2& \;\;\;\; x_1 - x_2^2- x_2 \\
		x_1 - x_2^2 - x_2&\;\;\;\;  x_1 - x_2^2 - x_2  & \;\;\;\; 2 x_1 - 2 x_2^2 + x_2 
	\end{bmatrix}\in \S^3_+
	\]
	at the point $\xb=0\in\R^2$. Here, we have $\G=G$ and $\C=\S^3_{+}$. Moreover, for each $x\in \R^2$ and $d\in\R^2$ we have
	\[
D\G(x) d = (d_1-2x_2d_2)\begin{bmatrix}
-1&\;\;\;\;2&\;\;\;\;1\\
2&\;\;\;\;-1&\;\;\;\;1\\
1&\;\;\;\;1&\;\;\;\;2
\end{bmatrix}
+d_2\begin{bmatrix}
1&\;\;\;\;1&\;\;\;\;-1\\
1&\;\;\;\;1&\;\;\;\;-1\\
-1&\;\;\;\;-1&\;\;\;\;1
\end{bmatrix}.
	\]
	The eigenvalues of $ 	D\G(x) d $ are $3(d_1-2x_2d_2)$, $-3(d_1-2x_2d_2)$, and $3d_2$ which makes it clear that Robinson's condition is not valid at $\bar{x}=0$ and that 
	$$L_{\Omega}(x)=\{(d_1,d_2) \in  \mathbb{R}^2 \;\;| \;\; d_1 = 2x_2d_2, d_2 \geq 0 \} $$ 
	and
	\[
	\Gamma_{\C}(x) = D\G(x)[L_\Omega(x)] = \R^+u_1u_1^T,
	\]
	where $u_1=[1,1,-1]^T$. Notice that $\Gamma_{\C}(x)$ does not depend on $x$ and that it is a face of $\S^3_+$.  Hence it coincides with the minimal face that contains it, that is, we have $F_{J_-}=\Gamma_{\C}(x)$ for all $x$.  Also, it is simple to see that $H(x)=\R^2$, therefore closed. We conclude that hypotheses A1 and A2 of Theorem \ref{ncp:frcrsc} are satisfied. A simple computation gives that $F_{J_-}^\perp$ is the subspace generated by the matrices $u_2u_2^T$ and $u_3u_3^T$ where $u_2=[-1,1,0]^T$ and $u_3=[1,1,2]^T$, which allows us to compute $D\G(x)^*[F^\perp]=\R^2$ for all $x$. This proves that CRSC holds and we can use Theorem \ref{ncp:frcrsc} to conclude that there exists a neighborhood $\mathcal{U}$ of $\xb$ such that \[
	\G^{-1}(\C)\cap \mathcal{U}=\G^{-1}(F_{J_-})\cap \mathcal{U}.
	\]
That is, the constraint $\G(x)\in\S^3_+$ can be locally described as \[\G(x)\in F_{J_{-}}=\left\{
	\begin{bmatrix} 
		\beta &  \beta & -\beta\\ 
		\beta & \beta & - \beta \\
		-\beta & - \beta & \beta 	 	\end{bmatrix} \colon  \beta\geq 0	
	\right\}.\]
	This gives us that the $2\times 2$ principal block of $\G(x)$ must have equal entries, which implies $x_1=x_2^2$ while the remaining entries provide $x_2\geq0$. One can easily check that indeed the feasible set is globally equal to $\{ (x_1,x_2) \in \mathbb{R}^2 : x_1 = x_2^2, x_2 \geq 0 \}$.	
\end{example}

\section{Conclusions}

Constant rank constraint qualifications have become an important tool in the context of nonlinear programming due to its broad range of applications \cite{crcq,rcrcq,minch,aes2010,rcpld,cpg,gfrerer,abms}, being used in the study of tilt stability, strong second-order optimality conditions, global convergence of first- and second-order algorithms, derivative of the value function, among  others. Notice that these constraint qualification conditions allow some redundancy of the constraints; for instance, they hold for linear constraints without requiring removing linearly dependent constraints, which would be necessary under other conditions. This in particular gives someone building an optimization model more freedom, in the sense that redundant constraints are allowed to be part of the model. For these reasons, very recently, we started extending these conditions to the conic context by considering nonlinear second-order cone and semidefinite programming. To be precise, we started by showing that a previous attempt of defining such conditions for second-order cones was incorrect \cite{erratum} and a first correct approach was defined in \cite{naive}, by avoiding dealing with the conic constraints. A complete definition suitable for global convergence of algorithms appeared in \cite{seq-crcq-socp,seq-crcq-sdp}. Finally, a more geometric approach was presented in \cite{CRCQfaces} by exploiting the facial structure of semidefinite and second-order cones.

In this paper we extended the geometric approach to the more general case of reducible cones while also presenting two interesting applications, namely, we showed that a strong second-order optimality condition holds under the constant rank assumption and we showed that a facial reduction procedure is available under this assumption, rewriting the problem equivalently using only a smaller face of the cone in such a way that Robinson's condition is fulfilled. In the process of doing so, we noticed that the constant rank assumption could be enforced only along this particular face, which generalizes the results from \cite{cpg} to the conic context. This idea can also be carried out in order to obtain the strong second-order necessary optimality condition under a weaker condition, which is new even in nonlinear programming.

It is interesting to notice that the second-order necessary optimality condition we showed is stronger than the one obtained under Robinson's condition. In particular, it may hold even when the set of Lagrange multipliers is unbounded. We expect this second-order necessary optimality condition to be particularly relevant in proving second-order stationarity of primal-dual second-order algorithms for conic programming since it can be checked by means of a single dual approximation. This is not the case of the condition proven under Robinson's assumption, which depends on the full set of Lagrange multipliers and is thus not suitable for some dual algorithms. We were also able to prove that, under our facial constant rank condition, the numeric value of the second-order term is {\it independent} of the particular Lagrange multiplier used, which is not well adopted by the nonlinear programming community and is yet a topic to be fully exploited further. 

Finally, in this paper we rely on the reduction approach, simplifying the analysis to consider only points on the vertex of a reduced cone. However, we expect that our approach should be general enough in order to tackle more general conic constraints, which we will investigate in a future work. Also, in order to obtain our facial reduction result (Theorem \ref{ncp:frcrsc}), we assumed that the closedness of the set $H(\xb)$ and the minimality of the face $F=\Fmin(\Gamma_{\C}(\xb))$ are stable in a neighborhood of $\xb$ (assumptions A1 and A2) which could possibly also be improved in the future.\jo{\\

{\bf Declaration}

The authors declare that they have no conflict of interest.}

%
%

\bibliographystyle{plain}
\bibliography{biblio}

\begin{thebibliography}{10}

\bibitem{abms}
R.~Andreani, E.~G. Birgin, J.~M. Mart{\'\i}nez, and M.~L. Schuverdt.
\newblock On augmented lagrangian methods with general lower-level constraints.
\newblock {\em SIAM Journal on Optimization}, 18(4):1286--1309, 2008.

\bibitem{aes2010}
R.~Andreani, C.~E. Echag{\"u}e, and M.~L. Schuverdt.
\newblock Constant-rank condition and second-order constraint qualification.
\newblock {\em Journal of Optimization theory and Applications},
  146(2):255--266, 2010.

\bibitem{erratum}
R.~Andreani, E.~H. Fukuda, G.~Haeser, H.~Ram\'{\i}rez, D.~O. Santos, P.~J.~S.
  Silva, and T.~P. Silveira.
\newblock Erratum to: {N}ew constraint qualifications and optimality conditions
  for second order cone programs.
\newblock {\em Set-Valued and Variational Analysis}, (30):329--333, 2022.

\bibitem{naive}
R.~Andreani, G.~Haeser, L.~M. Mito, H.~Ram\'{\i}rez, D.~O. Santos, and T.~P.
  Silveira.
\newblock Naive constant rank-type constraint qualifications for multifold
  second-order cone programming and semidefinite programming.
\newblock {\em Optimization Letters}, (16):589--610, 2022.

\bibitem{CRCQfaces}
R.~Andreani, G.~Haeser, L.~M. Mito, H.~Ram{\'i}rez~C., and T.~P. Silveira.
\newblock First- and second-order optimality conditions for second-order cone
  and semidefinite programming under a constant rank condition.
\newblock {\em Mathematical Programming}, 202:473--514, 2023.

\bibitem{seq-crcq-sdp}
R.~Andreani, G.~Haeser, L.M. Mito, and H.~Ram\'{\i}rez.
\newblock Sequential constant rank constraint qualifications for nonlinear
  semidefinite programming with applications.
\newblock {\em Set-Valued and Variational Analysis}, 31(3), 2023.

\bibitem{seq-crcq-socp}
R.~Andreani, G.~Haeser, L.M. Mito, H.~Ram\'{\i}rez, and T.P. Silveira.
\newblock Global convergence of algorithms under constant rank conditions for
  nonlinear second-order cone programming.
\newblock {\em Journal of Optimization Theory and Applications}, 195:42--78,
  2022.

\bibitem{rcpld}
R.~Andreani, G.~Haeser, M.~L. Schuverdt, and P.~J.~S. Silva.
\newblock A relaxed constant positive linear dependence constraint
  qualification and applications.
\newblock {\em Mathematical Programming}, 135(1-2):255--273, 2012.

\bibitem{cpg}
R.~Andreani, G.~Haeser, M.~L. Schuverdt, and P.~J.~S. Silva.
\newblock Two new weak constraint qualifications and applications.
\newblock {\em SIAM Journal on Optimization}, 22(3):1109--1135, 2012.

\bibitem{wolkowicz2012}
A.~Babak, N.~Krislock, A.~Ghodsi, and H.~Wolkowicz.
\newblock Large-scale manifold learning by semidefinite facial reduction.
\newblock {\em Technical Report, Universiy of Waterloo}, 2012.

\bibitem{conjnino}
R.~Behling, G.~Haeser, A.~Ramos, and D.S. Viana.
\newblock On a conjecture in second-order optimality conditions.
\newblock {\em Journal of Optimization Theory and Applications},
  176(3):625--633, 2018.
\newblock Extended version: arXiv:1706.07833.

\bibitem{bonn-comi-shap}
J.~F. Bonnans, R.~Cominetti, and A.~Shapiro.
\newblock Second order optimality conditions based on parabolic second order
  tangent sets.
\newblock {\em SIAM Journal on Optimization}, 9(2):466--492, 1999.

\bibitem{BonRam}
J.~F. Bonnans and H.~Ram\'irez.
\newblock Perturbation analysis of second-order cone programming problems.
\newblock {\em Mathematical Programming}, 104(2):205--227, 2005.

\bibitem{bonnans-shapiro}
J.~F. Bonnans and A.~Shapiro.
\newblock {\em Perturbation Analysis of Optimization Problems}.
\newblock Springer Verlag, New York, 2000.

\bibitem{wolkowicz1981}
J.~M. Borwein and H.~Wolkowicz.
\newblock Facial reduction for a cone-convex programming problem.
\newblock {\em Journal of the Australian Mathematical Society, Series A},
  30(3):369--380, 1981.

\bibitem{wolkowicz2013}
Y.~L. Cheung, S.~Schurr, and H.~Wolkowicz.
\newblock Preprocessing and regularization for degenerate semidefinite
  programs.
\newblock {\em In: Bailey D. et al. (eds) Computational and Analytical
  Mathematics. Springer Proceedings in Mathematics \& Statistics}, 50:251--303,
  2013.

\bibitem{ding-kyfan}
Chao Ding.
\newblock Variational analysis of the {K}y {F}an $k$-norm.
\newblock {\em Set-Valued and Variational Analysis}, 25(2):265--296, 2016.

\bibitem{gfrerer}
H.~Gfrerer and B.~S. Mordukhovich.
\newblock Complete characterizations of tilt stability in nonlinear programming
  under weakest qualification conditions.
\newblock {\em SIAM Journal on Optimization}, 25(4):2081--2119, 2015.

\bibitem{gould1975optimality}
F.~J. Gould and J.~W. Tolle.
\newblock Optimality conditions and constraint qualifications in banach space.
\newblock {\em Journal of Optimization Theory and Applications},
  15(6):667--684, 1975.

\bibitem{guignard}
M.~Guignard.
\newblock Generalized {K}unh-{T}ucker conditions for mathematical programming
  in a banach space.
\newblock {\em SIAM Journal of Control}, 7:232--241, 1969.

\bibitem{urruty}
J.-B. Hiriart-Urruty and C.~Lamar{\'e}chal.
\newblock {\em Fundamentals of Convex Analysis}.
\newblock Springer-Verlag Berlin Heidelberg, Berlin Heldelberg, 2001.

\bibitem{crcq}
R.~Janin.
\newblock Direction derivate of the marginal function in nonlinear programming.
\newblock {\em Mathematical Programming Study}, 21:110--126, 1984.

\bibitem{positivity}
A.Y. Kruger, L.~Minchenko, and J.V. Outrata.
\newblock On relaxing the {M}angasarian-{F}romovitz constraint qualification.
\newblock {\em Positivity}, 18(1):171--189, 2013.

\bibitem{shulu}
S.~Lu.
\newblock Implications of the constant rank constraint qualification.
\newblock {\em Mathematical Programming}, 126:365--392, 2011.

\bibitem{rcrcq}
L.~Minchenko and S.~Stakhovski.
\newblock On relaxed constant rank regularity condition in mathematical
  programming.
\newblock {\em Optimization}, 60(4):429--440, 2011.

\bibitem{minch}
L.~Minchenko and S.~Stakhovski.
\newblock Parametric nonlinear programming problems under the relaxed constant
  rank condition.
\newblock {\em SIAM Journal on Optimization}, 1:314--332, 2011.

\bibitem{pataki-geometry}
G.~Pataki.
\newblock The geometry of semidefinite programming.
\newblock In R.~Saigal, L.~Vandenberghe, and H.~Wolkowicz, editors, {\em
  Handbook of Semidefinite Programming.}, pages 29--65. Kluwer Academic
  Publishers, Waterloo, Canada, 2000.

\bibitem{pataki2007closedness}
G.~Pataki.
\newblock On the closedness of the linear image of a closed convex cone.
\newblock {\em Mathematics of Operations Research}, 32(2):395--412, 2007.

\bibitem{pataki2013}
G.~Pataki.
\newblock Strong duality in conic linear programming: Facial reduction and
  extended duals.
\newblock {\em In: Bailey D. et al. (eds) Computational and Analytical
  Mathematics. Springer Proceedings in Mathematics \& Statistics}, 50:613--634,
  2013.

\bibitem{wolkowicz1997}
M.~V. Ramana, L.~Tunçel, and H.~Wolkowicz.
\newblock Strong duality for semidefinite programming.
\newblock {\em SIAM Journal on Optimization}, 7(3):641--662, 1997.

\bibitem{Rob76}
S.~M. Robinson.
\newblock Stability theorems for systems of inequalities, {P}art {II}:
  differentiable nonlinear systems.
\newblock {\em {SIAM} Journal on Numerical Analysis}, 13:pp. 497--513, 1976.

\bibitem{rockafellar1970convex}
R.~T. Rockafellar.
\newblock {\em Convex Analysis}.
\newblock Princeton University Press, 1970.

\bibitem{shapiro-uniqueness}
A.~Shapiro.
\newblock On uniqueness of {L}agrange multipliers in optimization problems
  subject to cone constraints.
\newblock {\em SIAM Journal on Optimization}, 7:508--518, 1997.

\bibitem{shapiro-sensitivity}
A.~Shapiro.
\newblock {\em Journal of Mathematical Sciences}, 115(4):2554--2565, 2003.

\bibitem{shapiro1995eigenvalue}
A.~Shapiro and M.~K.~H. Fan.
\newblock On eigenvalue optimization.
\newblock {\em SIAM Journal on Optimization}, 5(3):552--569, 1995.

\bibitem{tuncel}
L.~Tun\c{c}el and H.~Wolkowicz.
\newblock Strong duality and minimal representations for cone optimization.
\newblock {\em Computational optimization and applications}, 53:619--648, 2012.

\bibitem{waki2013}
H.~Waki and M.~Muramatsu.
\newblock Facial reduction algorithms for conic optimization problems.
\newblock {\em Journal of Optimization Theory and Applications},
  158(1):188--215, 2013.

\end{thebibliography}

\end{document}